\title{Quasilinearization with regularizing tensor paraproducts}
\author{oluwadamilola fasina}
\date{December 2025}
\newtheorem{theorem}{Theorem}[section]
\newtheorem{proposition}[theorem]{Proposition}
\newtheorem{lemma}[theorem]{Lemma}
\newtheorem{corollary}[theorem]{Corollary}
\newtheorem{definition}[theorem]{Definition}
\newtheorem{remark}[theorem]{Remark}
\newtheorem{example}[theorem]{Example}
\begin{document}

\maketitle

\begin{abstract}
We extend Bony's celebrated work on paraproducts to continous and multiscale \emph{tensor} paraproducts. For $A \in \mathcal{C}^2(\mathbb{R})$ and $f \in \Lambda_{\alpha}([0,1]^2, d_d(x,y)^{\alpha} \times d'_d(x',y')^{\alpha})$, we construct an approximation, $\tilde{A}_{(N,N')}(f)$ to $A(f)$, replacing the operator $T: f \to A(f)$ with the continous tensor paraproduct, $\Pi^{(t,t')}_{(A',A'')}$, and the multiscale tensor paraproduct $\Pi^{(N,N')}_{(A',A'')}:f \to \tilde{A}_{(N,N')}(f) + \Delta_{ (N,N')}(A,f)$. In the multiscale case, we provide estimates on the residual, $\Delta_{(N,N')}(A,f)$, and show it has twice the regularity of $f$ such that
$\Delta_{(N,N')}(A,f) \in \Lambda_{2 \alpha}([0,1]^2)$ and $\lVert \Delta_{(N,N')}(A,f) \rVert_{\Lambda_{2\alpha}([0,1]^2)} \leq C_A \lVert f \rVert_{\Lambda_{\alpha}([0,1]^2)} $. Our theoretical findings are supplemented with a computational example. \\

\end{abstract}

\section{Introduction}

Bony's seminal work on paraproducts \cite{bony1981calcul} introduced a technique for separating the frequency components of operators, facilitating simpler operator and nonlinear PDE analysis. We extend Bony's work on paraproducts with a program concerning continuous and multiscale \emph{tensor} paraproducts for the operator:

\begin{align}
T : f \to A(f)
\label{eq:1}
\end{align}

where $f \in \Lambda_{\alpha}([0,1]^2, d_d(x,y)^{\alpha} \times d'_d(x',y')^{\alpha} , A \in \mathcal{C}^2(\mathbb{R})$.  $\Lambda_{\alpha}([0,1]^2, d_d(x,y)^{\alpha} \times d'_d(x',y')^{\alpha})$ is the space of mixed-Holder functions with respect to the tensor product of dyadic distances on each axis, and $A$ acts nonlinearly on $f$. In particular we replace the operator in (\ref{eq:1}) with the continuous (in the sense of scaling parameters) tensor paraproduct, $\Pi^{(t,t')}_{(A',A'')}$, and the multsicale tensor paraproduct, $\Pi^{(N,N')}_{(A',A'')}$. \\

Contemporary work on paraproducts is typically concerned with the regularity of bilinear and multilinear operators. In \cite{muscalu2004bi,muscalu2006multi}, regularity results of bilinear and multilinear operators are extended from one parameter to biparameters and multiple parameters, encoding the order of the pseudo-differential operator into analysis; i.e. estimates on the symbols $m(\xi_1,\eta_1) \times m(\xi_2,\eta_2)$ in the bilinear, biparameter case or the symbols $m_1(\xi_1,\xi_2, \ldots, \xi_n) \times m_2(\xi_1,\xi_2, \ldots, \xi_n) \times \ldots \times m_k(\xi_1,\xi_2, \ldots, \xi_n) $ in the k-linear, n-parameter case yield regularity results resembling those produced by Coifman and Meyer in \cite{meyer1990ondelettes}. Other examples are \cite{bonami2012paraproducts} who use paraproducts to split the bilinear operator $\mathcal{H}^1(\mathbb{R}^n) \times \mathcal{BMO}(\mathbb{R}^n) \to L^1(\mathbb{R}^n) + \mathcal{H}^{\phi}_{\omega}(\mathbb{R}^n)$ into the sum of operators $\mathcal{H}^1(\mathbb{R}^n) \times \mathcal{BMO}(\mathbb{R}^n) \to L^1(\mathbb{R}^n)$  and  $\mathcal{H}^1(\mathbb{R}^n) \times \mathcal{BMO}(\mathbb{R}^n) \to \mathcal{H}^{log}(\mathbb{R}^n)$ (where $\mathcal{H}^{\phi}_{\omega}(\mathbb{R}^n)$ is a Hardy-Orlicz space) and \cite{chang2022boundedness} who construct operator estimates for paraproducts on homogeneous spaces introduced by Coifman and Weiss \cite{coifman2006analyse}.\\

While construction of paraproducts from \cite{muscalu2004bi,muscalu2006multi} are for biparameter bilinear $T^{(2)}_m(f,g)$ and multilinear $T^{(2)}_m(f_1 \times f_2 \times \ldots \times f_n)$ operators and multiparameter bilinear $T^{(d)}_m(f,g)$ and multilinear $T^{(d)}_m(f_1 \times f_2 \times \ldots \times f_n)$ operators, which decompose operators based on their order, we consider an operator decomposition based on the dimension of the support of the function it acts on (in this case $d=2$ for $[0,1]^2$), permitting analytical control along each dimension. The main result we prove is:

\begin{theorem}
Suppose  $A \in \mathcal{C}^2(\mathbb{R})$, $ f \in  \Lambda_\alpha([0,1]^2), 0 < \alpha < \frac{1}{2}$, then for the operator $T: f \to A(f)$ we can approximate $A(f)$ with

\begin{align}
&  \tilde{A}_{(N,N')}(f) =  \sum_{j=0}^{N} \sum_{j'=0}^{N'}  A'(P^jP'^{j'}(f)) Q^jQ'^{j'}(f) + A''(P^jP'^{j'}(f))Q^jP'^{j'}(f)P^jQ'^{j'}(f)  
\label{eq:2}
\end{align}

such that the multiscale tensor paraproduct transforms $T : f \to A(f)$ to 

\begin{align}
\Pi^{(N,N')}_{(A',A'')} : f \to \tilde{A}_{(N,N')}(f) + \Delta_{(N,N')}(A,f)
\label{eq:3}
\end{align}

where $\Delta_{(N,N')}(A,f) = A(f) - \tilde{A}_{(N,N')}(f) \in \Lambda_{2\alpha}([0,1]^2)$ is the residual which has twice the regularity of $f$ and 

\begin{align}
\lVert \Delta_{(N,N')}(A,f) \rVert_{\Lambda_{2\alpha}([0,1]^2)} \leq C_A \lVert f \rVert_{\Lambda_{\alpha}([0,1]^2)}
\label{eq:4}
\end{align}
\label{thm:1}
\end{theorem}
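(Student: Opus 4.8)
The plan is to reduce \eqref{eq:4} to a dyadic (martingale) block estimate and then verify that estimate by a scale-localized second-order Taylor expansion of $A$. I will use the characterization of $\Lambda_\beta([0,1]^2)$ recalled earlier: a function $g$ belongs to $\Lambda_\beta$ with $\lVert g\rVert_{\Lambda_\beta}\lesssim c$ as soon as $\lVert g\rVert_\infty\lesssim c$, $\lVert Q^jg\rVert_\infty,\lVert Q'^{j'}g\rVert_\infty\lesssim c\,2^{-j\beta}$ (resp.\ $c\,2^{-j'\beta}$) and $\lVert Q^jQ'^{j'}g\rVert_\infty\lesssim c\,2^{-(j+j')\beta}$, together with the dual analysis estimates $\lVert Q^jP'^{j'}f\rVert_\infty\lesssim 2^{-j\alpha}\lVert f\rVert_{\Lambda_\alpha}$, $\lVert Q^jQ'^{j'}f\rVert_\infty\lesssim 2^{-(j+j')\alpha}\lVert f\rVert_{\Lambda_\alpha}$ for $f\in\Lambda_\alpha$. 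Since the bi-parameter second difference kills constants and functions of a single variable, \eqref{eq:4} is in substance a bound on the mixed second-difference seminorm of $\Delta_{(N,N')}(A,f)$, so it suffices to prove $\lVert Q^kQ'^{k'}\Delta_{(N,N')}(A,f)\rVert_\infty\lesssim C_A\lVert f\rVert_{\Lambda_\alpha}\,2^{-2(k+k')\alpha}$ for all $k,k'$ (the membership $\Delta\in\Lambda_{2\alpha}$ following since $\lVert\Delta\rVert_\infty<\infty$ trivially).

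First I would dispose of the truncation. Write $\Delta_{(N,N')}=\Delta_\infty+\bigl(\tilde A_{(\infty,\infty)}(f)-\tilde A_{(N,N')}(f)\bigr)$ with $\Delta_\infty:=A(f)-\tilde A_{(\infty,\infty)}(f)$. Each summand of the finite-to-infinite tail is a product of dyadic blocks with $L^\infty$ norm $\lesssim C_A\lVert f\rVert_{\Lambda_\alpha}2^{-(j+j')\alpha}$ (by the analysis estimates and boundedness of $A',A''$ on the compact range $f([0,1]^2)$), and each is localized at scale $(j,j')$; summing over $j>N$ or $j'>N'$ and invoking the synthesis criterion puts the tail in $\Lambda_{2\alpha}$ with norm $\lesssim C_A\lVert f\rVert_{\Lambda_\alpha}$. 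So it remains to treat $\Delta_\infty$. Fix $(k,k')$ and split $f=u+v+w+z$ according to scales relative to $(k,k')$: $u=P^kP'^{k'}f$, $v=(\mathrm{Id}-P^k)P'^{k'}f$, $w=P^k(\mathrm{Id}-P'^{k'})f$, $z=(\mathrm{Id}-P^k)(\mathrm{Id}-P'^{k'})f$, so that $\lVert v\rVert_\infty\lesssim 2^{-k\alpha}$, $\lVert w\rVert_\infty\lesssim 2^{-k'\alpha}$, $\lVert z\rVert_\infty\lesssim 2^{-(k+k')\alpha}$ (times $\lVert f\rVert_{\Lambda_\alpha}$). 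Expand $A(f)=A(u)+A'(u)(v+w+z)+\tfrac12 A''(u)(v+w+z)^2+\mathcal R$ with $\lvert\mathcal R\rvert\lesssim (v+w+z)^2\,\omega_{A''}(\lvert v+w+z\rvert)$, $\omega_{A''}$ the modulus of continuity of $A''$ on $f([0,1]^2)$, and apply $Q^kQ'^{k'}$.

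Using that $Q^k$ (resp.\ $Q'^{k'}$) annihilates functions measurable for the scale-$k$ (resp.\ scale-$k'$) partition, that $A'(u),A''(u)$ are scale-$(k,k')$ measurable, and that $v$ (resp.\ $w$) is scale-$k'$ (resp.\ scale-$k$) measurable in its second (resp.\ first) variable, one obtains $Q^kQ'^{k'}A(u)=0$, $Q^kQ'^{k'}[A'(u)v]=Q^kQ'^{k'}[A'(u)w]=0$, $Q^kQ'^{k'}[A''(u)v^2]=Q^kQ'^{k'}[A''(u)w^2]=0$, while $Q^kQ'^{k'}[A'(u)z]=A'(P^kP'^{k'}f)Q^kQ'^{k'}f$ and $Q^kQ'^{k'}[A''(u)vw]=A''(P^kP'^{k'}f)\,Q^kP'^{k'}f\,P^kQ'^{k'}f$. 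Since each summand of $\tilde A_{(\infty,\infty)}(f)$ is itself localized at a single scale, the corresponding block computation shows these two surviving terms are exactly $Q^kQ'^{k'}\tilde A_{(\infty,\infty)}(f)$, whence
\[
Q^kQ'^{k'}\Delta_\infty=Q^kQ'^{k'}\!\Bigl[\tfrac12 A''(u)z^2+A''(u)vz+A''(u)wz\Bigr]+Q^kQ'^{k'}\mathcal R .
\]
The $z^2$ term is harmless, $\lVert Q^kQ'^{k'}[A''(u)z^2]\rVert_\infty\lesssim\lVert A''\rVert_\infty\lVert z\rVert_\infty^2\lesssim C_A\lVert f\rVert_{\Lambda_\alpha}^2\,2^{-2(k+k')\alpha}$, which is the rate demanded by the synthesis criterion.

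The main obstacle — and the point where the precise design of $\tilde A$ and the precise meaning of $\Lambda_{2\alpha}$ have to be reconciled — is the pair of cross terms $A''(u)vz$, $A''(u)wz$ (and the unbalanced monomials in $\mathcal R$, e.g.\ $v^2z$, $vwz$): the crude bound $\lVert Q^kQ'^{k'}[A''(u)vz]\rVert_\infty\lesssim\lVert A''\rVert_\infty\lVert v\rVert_\infty\lVert z\rVert_\infty\lesssim C_A\lVert f\rVert_{\Lambda_\alpha}^2\,2^{-(2k+k')\alpha}$ yields, through the synthesis criterion, only the \emph{anisotropic} estimate $\lesssim d_d(x,x')^{2\alpha}d'_d(y,y')^{\alpha}+d_d(x,x')^{\alpha}d'_d(y,y')^{2\alpha}$, so the balanced exponent $2\alpha$ in \emph{both} variables is lost. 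Recovering \eqref{eq:4} is the technical heart: the plan is to exploit the multiplicative cancellations of dyadic martingale differences — writing $v=\sum_{l\ge k}Q^lP'^{k'}f$ and decomposing $z$ similarly, the off-scale products $(Q^lP'^{k'}f)(Q^{l'}Q'^{k'}f)$ with $l\neq l'$ have vanishing scale-$k$ content, and on the diagonal $l=l'$ the product of two scale-$l$ differences drops a scale — and then to see whether the reorganized sum regains the full factor $2^{-2(k+k')\alpha}$; if the diagonal contribution alone does not suffice, the fallback is to re-absorb the borderline terms into an enlarged paraproduct (equivalently, read $\Lambda_{2\alpha}$ in the iterated/anisotropic sense). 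Here $\mathcal C^2$ regularity of $A$ is exactly what is needed for $L^\infty$-boundedness and the one-variable block estimates (with possibly a mild Dini condition on $A''$ for the remainder $\mathcal R$), while $0<\alpha<\tfrac12$ ensures $\Lambda_{2\alpha}$ is an honest Hölder — not second-difference — class. Granting control of the cross terms and $Q^kQ'^{k'}\mathcal R$ at the rate $C_A\lVert f\rVert_{\Lambda_\alpha}2^{-2(k+k')\alpha}$, together with the parallel one-parameter estimates and $\lVert\Delta_\infty\rVert_\infty\lesssim C_A\lVert f\rVert_{\Lambda_\alpha}$, the synthesis criterion gives $\Delta\in\Lambda_{2\alpha}$ and \eqref{eq:4}.
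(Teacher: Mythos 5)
Your route is genuinely different from the paper's. The paper writes $A(P^{N+1}P'^{N'+1}f)$ as a telescoping double sum of mixed differences $A(P^{j+1}P'^{j'+1}f)-A(P^{j}P'^{j'+1}f)-A(P^{j+1}P'^{j'}f)+A(P^{j}P'^{j'}f)$, introduces the bilinear interpolation $h_{\mu,\omega}(j,j',f)$ in auxiliary parameters $\mu,\omega\in[0,1]$ so that each mixed difference becomes $\int_0^1\!\int_0^1\partial_\mu\partial_\omega A(P^jP'^{j'}f+h_{\mu,\omega})\,d\mu\,d\omega$, defines $\tilde A_{(N,N')}$ by discarding the $(\mu,\omega)$-dependent parts of the integrand, and then bounds the residual term by term in $L^\infty$, scale by scale, feeding the coefficient decay $2^{-(j+j')(2\alpha+1)}$ into Lemma \ref{lem:3.11}/Lemma \ref{lem:3.13}. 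You instead aim directly at the block estimates $\lVert Q^kQ'^{k'}\Delta\rVert_\infty\lesssim C_A\lVert f\rVert_{\Lambda_\alpha}2^{-2(k+k')\alpha}$ via a scale-localized second-order Taylor expansion of $A$ around $P^kP'^{k'}f$; your ``synthesis criterion'' plays the role of the paper's Lemma \ref{lem:3.11}. That is a legitimate and more classical Littlewood--Paley strategy, and it would, if completed, give a cleaner derivation of why exactly the two terms of \eqref{eq:2} appear.

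But the proposal contains a genuine gap which you flag and do not close: the cross terms $A''(u)vz$, $A''(u)wz$ and the unbalanced remainder monomials ($v^2z$, $vwz$, \dots). As you note, the crude bound yields only $2^{-(2k+k')\alpha}$, i.e.\ the anisotropic rate $d_d(x,y)^{2\alpha}d'_d(x',y')^{\alpha}$, which does not meet the balanced rate $2^{-2(k+k')\alpha}$ that your own criterion requires for \eqref{eq:4}; the argument then ends with ``granting control of the cross terms\dots'', so the decisive estimate of the theorem is assumed rather than proved. Your proposed rescue (off-diagonal cancellation of $(Q^lP'^{k'}f)(Q^{l'}Q'^{k'}f)$, $l\neq l'$) is plausible but must actually be carried out, and you yourself concede it may force a retreat to an anisotropic reading of $\Lambda_{2\alpha}$, which would change the statement being proved. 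A secondary unproved step: the identification $Q^kQ'^{k'}\tilde A_{(\infty,\infty)}(f)=A'(P^kP'^{k'}f)Q^kQ'^{k'}f+A''(P^kP'^{k'}f)Q^kP'^{k'}f\,P^kQ'^{k'}f$ needs justification for the fine-scale summands $j>k$, $j'>k'$, since a product such as $A'(P^jP'^{j'}f)Q^jQ'^{j'}f$ is not orthogonal to the scale-$(k,k')$ wavelets merely because $Q^jQ'^{j'}f$ is --- products of fine-scale blocks carry coarse-scale content. Until these two points are settled, the proposal does not establish \eqref{eq:4}.
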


To prove this result we introduce a novel lemma (proved in section 3) which connects the mixed-Holder regularity of $f$ to the decay of expansion coefficients of $f$ in an orthonormal tensor basis:

\begin{lemma}
Suppose $\beta^{j,j'}_{k,k'} $ is an expansion coefficient of $f(x,x')$ in an orthonormal tensor basis supported on the dyadic rectangle $I^j_k \times I^{j'}_{k'}$. One obtains $ |\beta^{j,j'}_{k,k'}| \leq 2^{-(j+j')(2\alpha + \epsilon)}, \epsilon \geq \frac{1}{2} \hspace{0.2 cm} \forall \hspace{0.2 cm} I^j_k \times I^{j'}_{k'} \in [0,1]^2$ \text{iff} $f(x,x') \in \Lambda_{2\alpha}([0,1]^2)$
\label{lemma:1.2}
\end{lemma}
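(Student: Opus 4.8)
The plan is to pass from the mixed-Hölder condition to the coefficient condition (and back) through the tensor expansion $f=\sum_{j,j',k,k'}\beta^{j,j'}_{k,k'}\,\psi^j_k\otimes\psi^{j'}_{k'}$, where — as is natural in this dyadic setting — I take the basis to be the tensor Haar system: each one-dimensional factor $\psi^j_k$ is orthonormal, has mean zero, is supported on $I^j_k$, and is constant on each of the two dyadic children of $I^j_k$. Two features of this system do all the work. First, the \emph{double cancellation} $\int\psi^j_k=\int\psi^{j'}_{k'}=0$. Second, because dyadic intervals are nested, $\psi^j_k(x)=\psi^j_k(y)$ as soon as $d_d(x,y)<2^{-j}$, so a scale-$j$ Haar factor is locally constant below scale $2^{-j}$. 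I use the $L^2$ normalization $\lVert\psi^j_k\rVert_2=1$, hence $\lVert\psi^j_k\rVert_1\sim 2^{-j/2}$ and $\lVert\psi^j_k\rVert_\infty\sim 2^{j/2}$; the exponent $\epsilon\ge\tfrac12$ in the statement is precisely this normalization factor, and $\epsilon=\tfrac12$ is the sharp value (a larger $\epsilon$ only makes the reconstruction direction easier).

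For $f\in\Lambda_{2\alpha}\Rightarrow$ decay, I would write $\beta^{j,j'}_{k,k'}=\iint f\,(\psi^j_k\otimes\psi^{j'}_{k'})$ and, using that $f(x,x'_0)$, $f(x_0,x')$ and $f(x_0,x'_0)$ are each constant in $x$ or in $x'$, subtract them off so that $\beta^{j,j'}_{k,k'}$ becomes the integral of the mixed difference $f(x,x')-f(x,x'_0)-f(x_0,x')+f(x_0,x'_0)$ against $\psi^j_k\otimes\psi^{j'}_{k'}$, for any $x_0\in I^j_k$, $x'_0\in I^{j'}_{k'}$. On the support $d_d(x,x_0)\le2^{-j}$ and $d'_d(x',x'_0)\le2^{-j'}$, so the bracket is $\le\lVert f\rVert_{\Lambda_{2\alpha}}2^{-2\alpha j}2^{-2\alpha j'}$; pairing with $\lVert\psi^j_k\rVert_1\lVert\psi^{j'}_{k'}\rVert_1\sim2^{-(j+j')/2}$ gives $|\beta^{j,j'}_{k,k'}|\lesssim\lVert f\rVert_{\Lambda_{2\alpha}}2^{-(j+j')(2\alpha+1/2)}$. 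For the converse I reconstruct $f$ from the series — it converges in $L^\infty$ since at each point only one index pair per scale is active and $\sum_{j,j'}2^{-2\alpha(j+j')}<\infty$ — and estimate the mixed difference $f(x,x')-f(y,x')-f(x,y')+f(y,y')$. Each basis term contributes $(\psi^j_k(x)-\psi^j_k(y))(\psi^{j'}_{k'}(x')-\psi^{j'}_{k'}(y'))$, which vanishes unless $j\ge J$ \emph{and} $j'\ge J'$ with $2^{-J}\sim d_d(x,y)$, $2^{-J'}\sim d'_d(x',y')$; for each surviving scale at most $2\times2$ index pairs are active, each of size $\le2^{-(j+j')(2\alpha+1/2)}\cdot2^{j/2}\cdot2^{j'/2}=2^{-2\alpha(j+j')}$, and summing the resulting geometric series over $j\ge J$, $j'\ge J'$ yields $\lesssim2^{-2\alpha J}2^{-2\alpha J'}\sim d_d(x,y)^{2\alpha}d'_d(x',y')^{2\alpha}$, i.e., $f\in\Lambda_{2\alpha}$.

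I expect the main obstacle to be organizational rather than computational: the estimate has to be built around the genuine mixed (product) difference so that both cancellations are used simultaneously — exploiting only one of them delivers merely separate one-variable $\Lambda_{2\alpha}$ control and destroys the dominating-mixed structure — and one must check that, because the metric is dyadic and the basis is dyadically supported, the coarse-scale block vanishes \emph{identically} rather than just being small; this exact cancellation is what produces the sharp exponent $2\alpha$ and the clean value $\epsilon=\tfrac12$ instead of $2\alpha-\eta$. Two smaller points to watch: the index count — at a fixed pair of points only $O(1)$ dyadic intervals per scale are relevant, so the sums over $k,k'$ are finite scale-by-scale and there is no logarithmic loss — and the reading of the norm, namely that $\Lambda_{2\alpha}([0,1]^2)$ here is the purely mixed Hölder seminorm together with the $L^\infty$ bound carried by the coarse/mean coefficients, so the marginal tensor terms $\psi^j_k\otimes\phi$ and $\phi\otimes\psi^{j'}_{k'}$ do not enter Lemma~\ref{lemma:1.2} and, if ever needed, would be handled by the one-parameter analogue of this characterization.
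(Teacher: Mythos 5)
Your proposal is correct and follows essentially the same route as the paper, which proves this lemma by combining Lemma~\ref{lem:3.12} (necessity: integrate the mixed difference against the tensor wavelet and use the H\"older bound together with the $L^2$ normalization to get the exponent $2\alpha+\tfrac12$) with Lemma~\ref{lem:3.11} (sufficiency: split the scales into a coarse block, where the mixed difference vanishes identically because the basis functions are locally constant below their own scale, and a fine block summed as a geometric series bounded by $d_d(x,y)^{2\alpha}d'_d(x',y')^{2\alpha}$). Your version is, if anything, more explicit than the paper's about the role of $\epsilon\ge\tfrac12$ as the $L^2$-normalization factor converting between orthonormal-basis coefficients and the characteristic-function amplitudes used in the sufficiency direction.
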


We are motivated by problems in matrix inference where the mixed-Holder regularity appears naturally as the rows and columns of the matrix have different structure and regularity \cite{gavish2010multiscale,gavish2012sampling} and can also be organized to have mixed-Holder regularity \cite{ankenman2014geometry}. Theorem \ref{thm:1} shows we can replace the operator $T: f \to A(f)$ with the multiscale tensor paraproduct $\Pi^{(N,N')}_{(A',A'')} : f \to \tilde{A}_{(N,N')}(f) + \Delta_{(N,N')}(A,f)$, such that $A$ is quasilinearized by replacing it with the high-low paraproducts, $A'(P^jP'^{j'}),A''(P^jP'^{j'})$ constructed in the approximation, and has a residual, $\Delta_{(N,N')}(A,f)$ which is twice as smooth as the input data, $f$. $\Pi^{(N,N')}_{(A',A'')}$ permits one to separate the singularites from the smooth parts of data, which has both theoretical and empirical implications in data processing, matrix inference, and numerical analysis. \\

\textbf{Paper Outline:} In Section 3 we provide key preliminaries and definitions needed to build the approximation and prove quantiative estimates on the residual. In section 4 we provide an explicit construction of the continous and multiscale \emph{tensor} paraproducts and prove the first part of the main theorem. In section 5 we complete the proof the main theorem by establishing quantitative estimates on the residual. We conclude by illustrating the utility of the multsicale tensor paraproduct with a computational example.

\section{Acknowledgments}

The author would like to thank Ronald R. Coifman for bringing the problem to the authors' attention, for numerous enlightening discussions, and his unique perspective on the problem.

\section{Preliminaries: Key Definitions and Lemmas}

\begin{definition}
Let the dyadic distance along an axis be:
    
\begin{align}
d_d(x,y) = \inf_{\tilde{I}} \{ |I_k^j| : (x,y) \in I_k^j \} 
\label{eq:5}
\end{align}
where $\tilde{I} = \{ |I_k^j | : I_k^j = [2^{-j}k,2^{-j}(k+1)), \forall j,k \in \mathbb{N} \}$ and $I^j_k$ is a dyadic interval along direction $j$ and $I^{j'}_{k'}$ is a dyadic interval along direction $j'$
\label{defn:3.1}
\end{definition}

Thus for the set $ [0,1]^2 $ one can consider the dyadic distances $d_d(x,y), d'_d(x',y')$ along each axis. Now we define mixed $\alpha$-Holder and $\alpha$-Holder regularity with respect to dyadic distances.

\begin{definition}
 $f: (x,x') \to \mathbb{R} \in \Lambda_{\alpha}([0,1]^2,d_d(x,y)^{\alpha} \times d'_d(x',y')^{\alpha})$ for $(x,x') \in [0,1]^2$ if the following conditions  are satisfied:

\begin{align}
\frac{ | f(x,x') - f(y,x') - f(x,y') + f(y,y') | } {d_d(x,y)^{\alpha}d'_d(x',y')^{\alpha}} \leq C,  \hspace{0.2 cm} \forall \hspace{0.2 cm} (x,x'), (y,x'), (x,y'), (y,y')  \hspace{0.2 cm} \in [0,1]^2
\label{eq:6}
\end{align}  

\begin{align}
\frac{ | f(x,x') - f(y,x') | }{d_d(x,y)^{\alpha}} \leq C, \hspace{0.2 cm} \forall \hspace{0.2 cm} (x,x'), (y,x') \in  [0,1]^2
\label{eq:7}
\end{align}

\begin{align}
\frac{ | f(x,x') - f(x,y') |}{d'_d(x',y')^{\alpha}} \leq C, \hspace{0.2 cm} \forall \hspace{0.2 cm} (x,x'), (x,y') \in  [0,1]^2
\label{eq:8}
\end{align}

where $\alpha,C > 0$ and $(x,x'), (y,x') ,(x,y'), (y,y')$ are vertices of a rectangle in $[0,1]^2$. We refer to $\Lambda_{\alpha}([0,1]^2,d_d(x,y)^{\alpha} \times d'_d(x',y')^{\alpha})$ as the space of mixed $\alpha$-holder functions with respect to the dyadic distance. As we only consider mixed $\alpha$-Holder functions with respect to dyadic distances we abbreviate $\Lambda_{\alpha}([0,1]^2,d_d(x,y)^{\alpha} \times d'_d(x',y')^{\alpha})$ as $\Lambda_{\alpha}([0,1]^2)$ for the rest of the paper. Note the second and third conditions are the usual definitions of $\alpha$-Holder regularity along each axis of $[0,1]^2$.
\label{defn:3.2}
\end{definition}

The norm in $\Lambda_{\alpha}([0,1])$ will be used to quantify the claim that the residual, $\Delta(A,f)$ has twice the regularity of $f$. Different characterizations of Holder norms were studied extensively in \cite{leeb2016holder}. We use the mixed-Holder norm defined in terms of the tensor wavelet coefficients:

\begin{definition} The mixed-Holder norm as measured by the tensor wavelet coefficients:

\begin{align}
\lVert f \rVert_{\Lambda_{\alpha}([0,1]^2)} = \sup_{j,k,j',k'} \frac{| <f(x,x'), \psi^j_k(x) \times \psi^{j'}_{k'}(x')> |}{2^{-(j+j')(\alpha + \frac{1}{2})}}
\label{eq:9}
\end{align}

\label{defn:3.3}
\end{definition}

This norm is useful for providing quantitative estimates of the residual and the computational example in the last section.

\begin{remark}
$\alpha$ can be an arbitrary positive value when the metric is a tree metric (e.g. dyadic distance metric) or tensor product of tree metric, otherwise we restrict $\alpha$ to be  $0  < \alpha < \frac{1}{2}$
\label{rem:3.4}
\end{remark}

To build the multiscale tensor paraproduct, $\Pi^{(N,N')}_{(A',A'')}$, we construct multiscale convolution operators using tensor wavelets. Wavelets form an orthonormal basis for $L^2(\mathbb{R})$ and posses additional desirable properties outlined in \cite{mallat1989theory}. Let $\phi$ and $\psi$ denote the father and mother wavelets as usual. Then the scaling $\phi^j_k(x)$ and wavelet $\psi^j_k(x)$ functions at scale $j=1, \ldots, N$ and location $k=1, \ldots, 2^j$ along one direction of $[0,1]^2$ are defined as:

\begin{definition} 1D wavelet family
\begin{align}
\phi^j_k(x) = 2^{\frac{j}{2}} \phi(2^jx - k), \psi^j_k(x) = 2^{\frac{j}{2}} \psi(2^jx - k) j, k \in \mathbb{N}
\label{eq:10}
\end{align}
\label{def:3.5}
\end{definition}

Let $\phi^{j'}_{k'}(x')$ and $\psi^{j'}_{k'}(x')$ be the scaling and wavelet functions along the other direction of $[0,1]^2$. With definition \ref{def:3.5} we can construct different combinations of orthonormal tensor scaling and wavelet functions: $\phi^j_k(x) \times \phi^{j'}_{k'}(x'), \psi^j_k(x) \times \psi^{j'}_{k'}(x'), \psi^j_k(x) \times \phi^{j'}_{k'}(x'), \psi^j_k(x) \times \psi^{j'}_{k'}(x')$. We now describe several convolution operators which are defined with respect to the different combinations of wavelets and scaling functions:

\begin{definition}
The multiscale convolution operators, $P^{j}P'^{j'},P^{j}Q'^{j'},Q^{j}P'^{j'},Q^{j}Q'^{j'}$ are constructed as follows:
    \begin{itemize}
     \item We call $P^{j}P'^{j'}$ the tensor \textbf{scaling} operator:
    \begin{align}
    P^{j}P'^{j'}(f) \coloneq \sum_{k=1}^{2^j}  \sum_{k'=1'}^{2^{j'}} P^{j}_k P'^{j'}_{k'}(f) =  \sum_{k=1}^{2^j}  \sum_{k'=1'}^{2^{j'}}  \int_{I^j_k \times I^{j'}_{k'}} f(y,y')  (\phi^{j}_k(y) \times \phi^{j'}_{k'}(y')) dy dy' \chi_{I^j_k \times I^{j'}_{k'}}(x,x')
    \label{eq:11}
    \end{align}
    \item We call $P^{j}Q'^{j'}$ the tensor \textbf{scaling-wavelet} operator:
    \begin{align}
    P^{j}Q'^{j'}(f) \coloneq \sum_{k=1}^{2^j}  \sum_{k'=1'}^{2^{j'}} P^{j}_k Q'^{j'}_{k'}(f) =  \sum_{k=1}^{2^j}  \sum_{k'=1'}^{2^{j'}} \int_{I^j_k \times I^{j'}_{k'}} f(y,y') (\phi^{j}_k(y) \times \psi^{j'}_{k'}(y')) dy dy'  \chi_{I^j_k \times I^{j'}_{k'}}(x,x')     
    \label{eq:12}
    \end{align}
    \item We call $Q^{j}P'^{j'}$ the tensor \textbf{wavelet-scaling} operator:
    \begin{align}
    Q^{j}P'^{j'}(f) \coloneq \sum_{k=1}^{2^j}  \sum_{k'=1'}^{2^{j'}} Q^{j}_k P'^{j'}_{k'}(f) =  \sum_{k=1}^{2^j}  \sum_{k'=1'}^{2^{j'}} \int_{I^j_k \times I^{j'}_{k'}} f(y,y') (\psi^{j}_k(y) \times \phi^{j'}_{k'}(y')) dy dy'   \chi_{I^j_k \times I^{j'}_{k'}}(x,x')  \nonumber \\
    \label{eq:13}
    \end{align}
    \item We call $Q^{j}Q'^{j'}$ the tensor \textbf{wavelet} operator
    \begin{align}
    Q^{j}Q'^{j'}(f) \coloneq \sum_{k=1}^{2^j}  \sum_{k'=1'}^{2^{j'}} Q^{j}_k Q'^{j'}_{k'}(f) =  \sum_{k=1}^{2^j}  \sum_{k'=1'}^{2^{j'}} \int_{I^j_k \times I^{j'}_{k'}} f(y,y') (\psi^{j}_k(y) \times \psi^{j'}_{k'}(y')) dy dy' \chi_{I^j_k \times I^{j'}_{k'}}(x,x')
    \label{eq:14}
    \end{align}
    \end{itemize}

where $\chi_{I^j_k \times I^{j'}_{k'}}(x,x')$ is the characteristic function on $I^j_k \times I^{j'}_{k'}$ and $ \lVert \phi^j_k(x) \times \phi^{j'}_{k'}(x') \rVert_{L^2([0,1]^2)} = \lVert \phi^j_k(x) \times \psi^{j'}_{k'}(x') \rVert_{L^2([0,1]^2)} = \lVert \psi^j_k(x) \times \phi^{j'}_{k'}(x') \rVert_{L^2([0,1]^2)} = \lVert \psi^j_k(x) \times \psi^{j'}_{k'}(x') \rVert_{L^2([0,1]^2)} = 1$.
\label{defn:3.6}
\end{definition}

\begin{remark}
As discussed in \cite{mallat1989theory}, the relation between the wavelet and convolution operators is $Q^j = P^{j+1} - P^j$. This relation will be useful for compressing the approximation and providing estimates on the residual.
\label{rem:3.7}
\end{remark}

We will also construct a continuous - in the sense of the location and scaling parameters of the wavelet family - tensor paraproduct decomposition; to do so, we analogously define the continuous orthonormal wavelet and scaling functions, $\psi^t_u(x), \phi^t_u(x)$, where $(t,u) \in \mathbb{R}_{+} \times [0,1] $ and such that $t$ and $j$ are related by $t = 2^{-j}$. 

\begin{definition}
Continuous wavelet orthonormal and scaling functions, $(\psi^t_u(x),\phi^t_u(x))$:
\begin{align}
\psi^t_u(x) = \frac{1}{\sqrt{t}}\psi(\frac{x - u}{t}), \phi^t_u(x) = \frac{1}{\sqrt{t}}\phi(\frac{x - u}{t})
\label{eq:15}
\end{align}
\label{defn:3.6}
\end{definition}

$\psi^{t'}_{u'}(x'), \phi^{t'}_{u'}(x')$ are defined along the other direction of $[0,1]^2$ as done in the multiscale setting. Let the continuous tensor scaling operator, $P^tP'^{t'}(f)$ be defined as:

\begin{definition} Continuous tensor scaling operator, $P^tP'^{t'}(f)$:

\begin{align}
& P^t P'^{t'}(f)(u_m,u'_m)\coloneq \int_0^1 \int_0^1 f(y,y') (\phi_{u_m}^t(y) \times \phi_{u'_m}^{t'}(y')) dy dy'  \chi_{[0,1]^2}(u_m,u'_m) \nonumber \\
& P^t P'^{t'}(f) \coloneq \sum_{m=1}^{\infty} P^t P'^{t'}(f)(u_m,u'_m) , \hspace{0.2 cm} (u_m,u'_m)_{m=1}^{\infty} \in [0,1]^2
\label{eq:16}
\end{align}

where $\chi_{[0,1]^2}$ is the characteristic function on $[0,1]^2$ and 
$\lVert \phi^t_u(x) \times \phi^{t'}_{u'}(x') \rVert_{L^2([0,1]^2)} = 1$
\label{defn:3.7}
\end{definition}

The continuous and multiscale orthonormal wavelet families are key ingredients in constructing both the continuous and multiscale tensor paraproduct decompositions. For the multiscale construction, we need provide a few lemmas establishing a connection between Holder regularity and the expansion coefficients of the wavelet and tensor wavelet families they are processed in. Let $c^j_k$ and $d^j_k$ be the scaling and wavelet coefficients of $f$ on the interval $I^j_k$, respectively. 

\begin{align}
c^j_k \coloneq \int_{I^j_k} f(y) \phi^j_k(y) dy, d^j_k \coloneq \int_{I^j_k} f(y) \psi^j_k(y) dy  
\label{eq:17}
\end{align}

Then the following lemma from \cite{gavish2010multiscale} establishes a connection between the decay of the wavelet family coefficients and the holder regularity of $f$:

\begin{lemma}
Let $d^j_k $ and $c^j_k$ be the wavelet and scaling expansion coefficients at the interval $I^j_k$.  $|d^j_k| , |c^j_k| \leq 2^{-j(\alpha + \frac{1}{2})} \hspace{0.2 cm} \forall \hspace{0.2 cm} I^j_k \in [0,1]$ \text{iff} $f(x) \in \Lambda_{\alpha}([0,1])$
\label{lem:3.9}
\end{lemma}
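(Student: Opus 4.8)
The statement is the wavelet characterization of Hölder--Lipschitz regularity with respect to the dyadic metric, and the plan is to prove both implications from the multiresolution decomposition $f = P^{j_0}f + \sum_{j \ge j_0} Q^j f$, using the identity $Q^j = P^{j+1} - P^j$ from Remark~\ref{rem:3.7}. The structural fact driving the argument is that $P^{j_0}f$ and each $Q^j f$ are piecewise constant on the dyadic intervals of scale $j_0$ and $j+1$ respectively (the latter because $P^j f$ is already constant on the coarser scale-$j$ intervals), so that differences $f(x)-f(y)$ collapse to a tail sum over fine scales.

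For the forward implication I would exploit the vanishing mean of the mother wavelet. Fixing any reference point $x_0 \in I^j_k$, one writes $d^j_k = \int_{I^j_k}\bigl(f(y) - f(x_0)\bigr)\psi^j_k(y)\,dy$; since any two points of $I^j_k$ lie at dyadic distance at most $|I^j_k| = 2^{-j}$, Definition~\ref{defn:3.2} gives $|f(y) - f(x_0)| \le \lVert f\rVert_{\Lambda_\alpha}2^{-j\alpha}$ on that interval, and $\lVert \psi^j_k\rVert_{L^1} = \lVert\psi\rVert_{L^1}2^{-j/2}$, so the two factors multiply to the claimed $|d^j_k| \lesssim 2^{-j(\alpha + 1/2)}$. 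The scaling coefficients carry no cancellation, so there I would use only that an $\alpha$-Hölder function on the bounded interval $[0,1]$ is bounded, giving $|c^j_k| \le \lVert f\rVert_\infty\lVert\phi\rVert_{L^1}2^{-j/2}$.

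For the converse I would reconstruct $f$ from its coefficients and estimate increments scale by scale. Because $\alpha > 0$ and only boundedly many wavelets at each scale are nonzero at a given point, the coefficient bounds make $P^{0}f + \sum_{j\ge 0} Q^j f$ converge uniformly, so $f$ is continuous; to bound $|f(x) - f(y)|$ for $x \ne y$, let $j^\ast$ be the largest scale at which $x$ and $y$ share a dyadic interval $I^{j^\ast}_{k^\ast}$, so $d_d(x,y) = 2^{-j^\ast}$ by Definition~\ref{defn:3.1}. All contributions from levels $j < j^\ast$ and from the coarse level are constant on $I^{j^\ast}_{k^\ast}$ and therefore cancel, leaving $f(x) - f(y) = \sum_{j \ge j^\ast}\bigl(Q^j f(x) - Q^j f(y)\bigr)$. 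Each term is at most $2\max_k|d^j_k|\,\lVert\psi\rVert_\infty 2^{j/2} \lesssim 2^{-j\alpha}$, and summing the geometric tail gives $|f(x) - f(y)| \lesssim 2^{-j^\ast\alpha} = d_d(x,y)^\alpha$, i.e. $f \in \Lambda_\alpha([0,1])$.

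The step I expect to be the main obstacle is the coarse-scale cancellation in the converse: identifying the separating scale $j^\ast$ of the dyadic distance with the scale below which every wavelet contribution is constant across $x$ and $y$. For Haar-type wavelets --- the natural basis for the dyadic metric, consistent with Remark~\ref{rem:3.4} permitting arbitrary $\alpha > 0$ --- this is exact, since $\psi^j_k$ is piecewise constant on the two dyadic children of $I^j_k$. For a general compactly supported wavelet it is only approximate, and one must instead estimate how much $\psi^j_k$ with $j < j^\ast$ can vary over a scale-$j^\ast$ interval; combined with the coefficient decay this still yields an $O(2^{-j^\ast\alpha})$ bound, but bookkeeping the overlapping supports and the telescoping of $\sum(P^{j+1}-P^j)$ requires care.
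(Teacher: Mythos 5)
The paper does not actually prove this lemma---its ``proof'' is a one-line citation to \cite{gavish2010multiscale}---so there is no internal argument to compare against; your attempt is the standard self-contained multiresolution argument, and the parts of it concerning the wavelet coefficients are sound. The forward bound $|d^j_k|\lesssim 2^{-j(\alpha+1/2)}$ via the vanishing mean of $\psi$ and $\lVert\psi^j_k\rVert_{L^1}=2^{-j/2}\lVert\psi\rVert_{L^1}$ is correct, and your converse---telescoping $Q^j=P^{j+1}-P^j$, cancelling all scales coarser than the separating scale $j^\ast$ with $d_d(x,y)=2^{-j^\ast}$, and summing the geometric tail $\sum_{j\ge j^\ast}2^{-j\alpha}$---is the right argument in this setting; your caveat about Haar versus general compactly supported wavelets is well taken, and the paper's own convolution operators (Definition~\ref{defn:3.6}), which multiply by $\chi_{I^j_k}$, are exactly the piecewise-constant Haar-type setup in which the cancellation is exact.

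The genuine gap is the scaling-coefficient half of the statement, and your own writeup exposes it without resolving it. The lemma asserts $|c^j_k|\le 2^{-j(\alpha+\frac12)}$, but the estimate you give, $|c^j_k|\le\lVert f\rVert_\infty\lVert\phi\rVert_{L^1}2^{-j/2}$, is only $O(2^{-j/2})$ and is short of the target by the factor $2^{-j\alpha}$. This is not a fixable omission: for $f\equiv 1$, which lies in $\Lambda_\alpha([0,1])$ with zero seminorm, one computes $c^j_k=\int_{I^j_k}\phi^j_k=2^{-j/2}\int\phi$, which violates the claimed bound for every $j\ge 1$ whenever $\int\phi\ne 0$. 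So the forward implication, as stated for $c^j_k$, is false without an additional normalization (e.g.\ mean-zero on each dyadic interval, or reinterpreting $c^j_k$ as a difference of averages), and no proof of the literal statement can close this. You should either restrict the claim to the wavelet coefficients $d^j_k$---which is all the converse direction uses, and all that the rest of the paper's residual estimates actually need---or state explicitly the extra hypothesis under which the scaling-coefficient decay holds. As written, your proposal silently proves a weaker statement than the one asserted.
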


\begin{proof}
See \cite{gavish2010multiscale}
\end{proof}

A similar result was proved in \cite{gavish2012sampling} for mixed Holder functions. Suppose $\alpha^{j,j'}_{k,k'}, \omega^{j,j'}_{k,k'}, \eta^{j,j'}_{k,k'}$ are the coefficients of $f \in \Lambda_{\alpha}([0,1]^2)$ in the tensor wavelet family supported on the rectangle $I^j_k \times I^{j'}_{k'}$. Then the following lemma holds:

\begin{lemma}
Let $\alpha^{j,j'}_{k,k'}, \omega^{j,j'}_{k,k'}, \eta^{j,j'}_{k,k'}$ be the expansion coefficients of $f(x,x')$ in the orthonormal tensor wavelet, tensor scaling-wavelet/wavelet-scaling, and tensor scaling bases, respectively. $|\alpha^{j,j'}_{k,k'}|, |\omega^{j,j'}_{k,k'}|, |\eta^{j,j'}_{k,k'}| \leq 2^{-(j+j')(\alpha + \frac{1}{2})} \hspace{0.2 cm} \forall \hspace{0.2 cm} I^j_k \times I^{j'}_{k'} \in [0,1]^2$ \text{iff} $f(x,x') \in \Lambda_{\alpha}([0,1]^2)$
\label{lem:3.10}
\end{lemma}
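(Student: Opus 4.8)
The plan is to prove the biconditional by \emph{tensorization}: reduce the two–dimensional statement to \emph{two} applications of the one–dimensional \Cref{lem:3.9}, one in each variable, using the mixed–Hölder conditions \eqref{eq:6}--\eqref{eq:8} to transport regularity through a partial transform. Throughout I write $\Phi$ for an element of $\{\phi^{j}_{k},\psi^{j}_{k}\}$ and $\Xi$ for an element of $\{\phi^{j'}_{k'},\psi^{j'}_{k'}\}$, and I use that $\lVert \Phi\rVert_{L^1}\le |I^j_k|^{1/2}=2^{-j/2}$ by Cauchy--Schwarz.

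\textbf{Direction 1: $f\in\Lambda_\alpha([0,1]^2)\Rightarrow$ coefficient decay.} Fix $(j,k)$ and set $g(x'):=\langle f(\cdot,x'),\Phi\rangle=\int_{I^j_k}f(y,x')\,\Phi(y)\,dy$. The key intermediate claim is that $g\in\Lambda_\alpha([0,1])$ in the variable $x'$ with $\lVert g\rVert_{\Lambda_\alpha([0,1])}\le C\lVert f\rVert_{\Lambda_\alpha([0,1]^2)}\,2^{-j(\alpha+\frac{1}{2})}$. For the pointwise bound, $g(x')$ is the $(j,k)$ scaling/wavelet coefficient of the one–variable function $y\mapsto f(y,x')$, which lies in $\Lambda_\alpha([0,1])$ with norm $\le C\lVert f\rVert_{\Lambda_\alpha([0,1]^2)}$ by \eqref{eq:7}; hence \Cref{lem:3.9} gives $|g(x')|\le C\lVert f\rVert\,2^{-j(\alpha+\frac{1}{2})}$. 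For the Hölder seminorm, $g(x')-g(y')=\langle f(\cdot,x')-f(\cdot,y'),\Phi\rangle$ is the $(j,k)$ coefficient of $y\mapsto f(y,x')-f(y,y')$, whose sup norm is $\le C\lVert f\rVert\,d'_d(x',y')^{\alpha}$ by \eqref{eq:8} and whose $\alpha$–Hölder seminorm in $y$ is $\le C\lVert f\rVert\,d'_d(x',y')^{\alpha}$ by \eqref{eq:6}; a second application of \Cref{lem:3.9} gives $|g(x')-g(y')|\le C\lVert f\rVert\,2^{-j(\alpha+\frac{1}{2})}d'_d(x',y')^{\alpha}$. This proves the claim about $g$. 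A third application of \Cref{lem:3.9}, now in $x'$ to $g$, bounds both $\langle g,\phi^{j'}_{k'}\rangle$ and $\langle g,\psi^{j'}_{k'}\rangle$ by $C\lVert g\rVert_{\Lambda_\alpha([0,1])}\,2^{-j'(\alpha+\frac{1}{2})}\le C\lVert f\rVert\,2^{-(j+j')(\alpha+\frac{1}{2})}$. Since $\langle f,\Phi\otimes\Xi\rangle=\langle g,\Xi\rangle$ by Fubini, letting $\Phi\times\Xi$ run over the four combinations simultaneously yields the bounds on $\alpha^{j,j'}_{k,k'}$ (the $\psi\otimes\psi$ case), on $\omega^{j,j'}_{k,k'}$ (the $\psi\otimes\phi$ and $\phi\otimes\psi$ cases), and on $\eta^{j,j'}_{k,k'}$ (the $\phi\otimes\phi$ case).

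\textbf{Direction 2: coefficient decay $\Rightarrow f\in\Lambda_\alpha([0,1]^2)$.} Reconstruct $f$ from its tensor wavelet expansion and verify the three conditions of \Cref{defn:3.2} termwise. For \eqref{eq:7}, fix $x'$ and regroup the expansion as a one–variable wavelet expansion in $x$: the coefficient multiplying a given $\psi^j_k(x)$ is $\sum_{j',k'}(\text{coeff})\,\Xi^{j'}_{k'}(x')$, and since $\sum_{k'}|\Xi^{j'}_{k'}(x')|\le C2^{j'/2}$ for each $j'$, the hypothesis and summation of the resulting geometric series in $j'$ bound it by $C\lVert f\rVert\,2^{-j(\alpha+\frac{1}{2})}$ uniformly in $x'$; the converse half of \Cref{lem:3.9} then gives $f(\cdot,x')\in\Lambda_\alpha([0,1])$ with a uniform bound, which is \eqref{eq:7}, and \eqref{eq:8} follows symmetrically. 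For the mixed condition \eqref{eq:6}, apply the mixed–difference operator termwise; pieces that are constant in one variable are annihilated and each surviving term factors as $(\psi^j_k(x)-\psi^j_k(y))(\psi^{j'}_{k'}(x')-\psi^{j'}_{k'}(y'))$. The bound then reduces to the elementary dyadic estimate $\sum_{j,k}2^{-j(\alpha+\frac{1}{2})}|\psi^j_k(x)-\psi^j_k(y)|\le C\,d_d(x,y)^{\alpha}$ and its analogue in $x'$: split the sum at the separation scale $\ell$ defined by $d_d(x,y)=2^{-\ell}$; for $j<\ell$ the wavelets do not distinguish $x$ from $y$, for $j=\ell$ a single interval contributes $O(2^{-\ell\alpha})$, and for $j>\ell$ only $O(1)$ intervals contribute per scale, each of height $\le C2^{j/2}$, and $\sum_{j>\ell}2^{-j\alpha}\le C2^{-\ell\alpha}$. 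Multiplying the two factors gives $\le C\lVert f\rVert\,d_d(x,y)^{\alpha}d'_d(x',y')^{\alpha}$.

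\textbf{Main obstacle.} The delicate point is Direction 1. Bounding the partial transform $g(x')=\langle f(\cdot,x'),\phi^j_k\rangle$ crudely by $\lVert\phi^j_k\rVert_{L^1}\sup|f|$ only produces decay $2^{-j/2}$, not $2^{-j(\alpha+\frac{1}{2})}$; one must instead recognize $g$ and its increments $g(x')-g(y')$ as one–dimensional scaling/wavelet coefficients and re–invoke \Cref{lem:3.9}, and it is precisely in controlling the increment that the \emph{genuinely mixed} hypothesis \eqref{eq:6} — not just the one–axis conditions \eqref{eq:7}--\eqref{eq:8} — is needed to furnish the second factor of decay. A secondary bookkeeping issue is reconciling the scaling operators $P^jP'^{j'}$, which are indexed by all scales, with the tensor wavelet \emph{basis}, in which scaling functions appear only at the coarsest scale; the telescoping identity $Q^j=P^{j+1}-P^j$ of \Cref{rem:3.7} handles this, and one should also note that the norm of \Cref{defn:3.3} is insensitive to the DC component, so the statement is naturally understood modulo constants, which is what makes the scaling–coefficient half of \Cref{lem:3.9} consistent.
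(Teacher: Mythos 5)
Your proposal is correct in substance, but it is worth knowing that the paper does not actually prove this lemma at all: its entire proof is ``See \cite{gavish2012sampling}'', deferring to the Gavish--Coifman reference exactly as it defers \Cref{lem:3.9} to \cite{gavish2010multiscale}. What you supply is therefore a genuinely self-contained alternative: a tensorization argument that iterates the one-dimensional \Cref{lem:3.9} through the partial transform $g(x')=\langle f(\cdot,x'),\Phi\rangle$, using \eqref{eq:7} for the pointwise bound on $g$, and \eqref{eq:6} together with \eqref{eq:8} to show the increment $g(x')-g(y')$ is the coefficient of a one-variable function of small H\"older norm -- which is indeed the only place the genuinely mixed condition enters, and is the right place to locate the difficulty. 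The converse direction by termwise verification of the three conditions of \Cref{defn:3.2}, with the mixed difference annihilating every basis element that is constant in one variable and the surviving terms factoring into two one-dimensional dyadic sums, is also sound in the Haar setting the paper implicitly works in. The one caveat you should make explicit rather than leave as an aside is the scaling-coefficient half: as literally stated, both \Cref{lem:3.9} and this lemma fail for the coefficients $c^j_k$ and $\eta^{j,j'}_{k,k'}$ of a nonzero constant function (for $f\equiv 1$ one gets $|c^j_k|=2^{-j/2}$, not $2^{-j(\alpha+1/2)}$), so the forward implication for the scaling coefficients must be read modulo constants or with the mean subtracted on each dyadic cell; this defect is inherited from the paper's statement, not introduced by your argument. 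What your route buys is an actual proof where the paper has only a citation, at the cost of relying on a quantitative form of \Cref{lem:3.9} (coefficient bounds proportional to $\lVert f\rVert_{\Lambda_\alpha}$) that the paper also states only in normalized form.
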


\begin{proof}
See \cite{gavish2012sampling}
\end{proof}

This result brings us to our lemma, which can be seen as a slight generalization of the previous one, as the assumption on the coefficients is less restrictive. It will be used to establish $2\alpha$-holder regularity of the residual $\Delta_{(N,N')}(A,f)$ (to be precisely characterized later) between $A(f)$ and its approximation $\tilde{A}_{(N,N')}(f)$. 

\begin{lemma}
Suppose $f(x,x')$ can be written as a sum over characteristic functions multiplied by constant coefficients supported on dyadic rectangles $I^j_k \times I^{j'}_{k'}$ with the estimate of $ | \beta^{j,j'}_{k,k'} | \leq 2^{-(j+j')(2\alpha + \epsilon)} , \epsilon > 0.\hspace{0.2 cm} \forall \hspace{0.2 cm} I^j_k \times I^{j'}_{k'} \in [0,1]^2$.  Then $f(x,x') \in \Lambda_{2\alpha}([0,1]^2)$
\label{lem:3.11}
\end{lemma}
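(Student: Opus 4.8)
The plan is to verify directly the three defining inequalities \eqref{eq:6}, \eqref{eq:7}, \eqref{eq:8} of $\Lambda_{2\alpha}([0,1]^2)$ with H\"older exponent $2\alpha$ in place of $\alpha$. Writing $f(x,x') = \sum_{j,k,j',k'}\beta^{j,j'}_{k,k'}\,\chi_{I^j_k}(x)\,\chi_{I^{j'}_{k'}}(x')$, I would first record that the decay hypothesis $|\beta^{j,j'}_{k,k'}|\le 2^{-(j+j')(2\alpha+\epsilon)}$ makes this series absolutely convergent at each point: at a fixed $(x,x')$ only one pair $(k,k')$ contributes per scale $(j,j')$, and $\sum_{j,j'}2^{-(j+j')(2\alpha+\epsilon)}<\infty$ since $2\alpha+\epsilon>0$. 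Hence $f$ is a well-defined bounded function and the finite differences below may be computed term by term.

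The central point is the interplay between the dyadic filtration and the distance. For $x\ne y$ one has $d_d(x,y)=2^{-j_0}$, where $j_0$ is the largest scale at which $x$ and $y$ share a dyadic cell, and by nesting of dyadic intervals $\chi_{I^j_k}(x)-\chi_{I^j_k}(y)=0$ for every $j\le j_0$, while for each $j\ge j_0+1$ it is nonzero (equal to $\pm1$) for at most two values of $k$, namely the cells containing $x$ and $y$. The analogous statement holds in the primed variable with $2^{-j'_0}=d'_d(x',y')$.

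For \eqref{eq:6}, the second-order difference of the product $\chi_{I^j_k}(x)\chi_{I^{j'}_{k'}}(x')$ factors as $[\chi_{I^j_k}(x)-\chi_{I^j_k}(y)]\,[\chi_{I^{j'}_{k'}}(x')-\chi_{I^{j'}_{k'}}(y')]$, so by the triangle inequality
\begin{align}
|f(x,x')-f(y,x')-f(x,y')+f(y,y')|\ \le\ 4\sum_{j\ge j_0+1}\ \sum_{j'\ge j'_0+1}2^{-(j+j')(2\alpha+\epsilon)}.
\nonumber
\end{align}
The two geometric sums evaluate to a constant multiple of $2^{-j_0(2\alpha+\epsilon)}2^{-j'_0(2\alpha+\epsilon)}=d_d(x,y)^{2\alpha+\epsilon}d'_d(x',y')^{2\alpha+\epsilon}$, and since both distances lie in $(0,1]$ this is $\le C\,d_d(x,y)^{2\alpha}d'_d(x',y')^{2\alpha}$, which is \eqref{eq:6} with exponent $2\alpha$ (with room to spare). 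For \eqref{eq:7} the primed factor is instead $\chi_{I^{j'}_{k'}}(x')$, which for each scale $j'$ is nonzero for exactly one $k'$, so the bound becomes $2\big(\sum_{j\ge j_0+1}2^{-j(2\alpha+\epsilon)}\big)\big(\sum_{j'\ge 0}2^{-j'(2\alpha+\epsilon)}\big)$; the first factor contributes $2^{-j_0(2\alpha+\epsilon)}$ up to a constant and the second is a finite constant, giving $|f(x,x')-f(y,x')|\le C\,d_d(x,y)^{2\alpha+\epsilon}\le C\,d_d(x,y)^{2\alpha}$. Inequality \eqref{eq:8} is symmetric.

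The step I expect to require the most care is the combinatorial/geometric claim that $\chi_{I^j_k}(x)-\chi_{I^j_k}(y)$ is supported on scales $j\ge j_0+1$ with at most two active cells per scale: this is exactly what converts coefficient decay into the distance factor $d_d(x,y)^{2\alpha+\epsilon}$, and it should be verified with the half-open convention $I^j_k=[2^{-j}k,2^{-j}(k+1))$ and the single endpoint $x=1$ (handled by hand or by one-sided continuity) kept in mind. Everything else reduces to summing geometric series and the elementary monotonicity $t^{2\alpha+\epsilon}\le t^{2\alpha}$ for $t\in(0,1]$; note that this direction uses only $\epsilon>0$, so no lower bound on $\epsilon$ is needed.
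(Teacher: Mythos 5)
Your proposal is correct and follows essentially the same route as the paper: split the scales into a coarse range where the dyadic differences of the characteristic functions vanish and a fine range where the coefficient decay yields a geometric series collapsing to $d_d(x,y)^{2\alpha+\epsilon}d'_d(x',y')^{2\alpha+\epsilon}\le d_d(x,y)^{2\alpha}d'_d(x',y')^{2\alpha}$. Your execution is in fact slightly sharper than the paper's (per-axis thresholds $j_0,j'_0$ rather than a single cutoff on $j+j'$, and an explicit treatment of the one-variable conditions \eqref{eq:7}--\eqref{eq:8}), but the underlying idea is identical.
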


\begin{proof}

Conceptually, the proof boils down to two cases. Pick 4 vertices of an arbitrary dyadic rectangle in $[0,1]^2$ and define a dyadic rectangle which is the infimum of all dyadic rectangles in $[0,1]^2$ containing the four points. In one case consider all the scales where the points are contained in the dyadic rectangle and surmise that the mixed difference is 0. In the other case, recall that all coefficients at a fixed scale have the same estimate to construct a geometric series which is bounded by the product of the dyadic distances along each axis by construction. The same reasoning holds for showing dyadic holder regularity along the individual axes which completes the proof. Now we proceed formally:

Let $\vec{x}_1 = (x,x')$, $\vec{x}_2 = (y,x')$, $\vec{x}_3 = (x,y')$, $\vec{x}_4 = (y,y') \in [0,1]^2$ be an arbitrary rectangle in $[0,1]^2$. To show

\begin{align}
\frac{|f(\vec{x}_1) - f(\vec{x}_2) -f(\vec{x}_3) + f(\vec{x}_4)|}{d_d(x,y)^{2\alpha}d'_d(x',y')^{2\alpha}} \leq C
\label{eq:18}
\end{align}

\begin{align}
\frac{|f(\vec{x}_1) - f(\vec{x}_2)|}{d_d(x,y)^{2\alpha}} \leq C
\label{eq:19}
\end{align}

\begin{align}
\frac{|f(\vec{x}_1) - f(\vec{x}_3)|}{d'_d(x',y')^{2\alpha}} \leq C
\label{eq:20}
\end{align}

define $\tilde{R}$ to be the smallest dyadic rectangle containing all four points: 

\begin{align}
\tilde{R} \coloneq \inf_{\substack{ (I^j_k \times I^{j'}_{k'}) \in \\ [0,1]^2}} \{ I^j_k \times I^{j'}_{k'} : \vec{x}_1, \vec{x}_2, \vec{x}_3, \vec{x}_4 \in I^j_k \times I^{j'}_{k'}\}
\label{eq:21}
\end{align}

Now construct a set comprised of a sequence of tuples of scaling parameters in each direction such that all associated dyadic rectangles are larger than $\tilde{R}$

\begin{align}
U = \{ (j,j') : | \tilde{R} | < 2^{-(j + j')} \}
\label{eq:22}
\end{align}

Similarly, construct a set comprised of a sequence of tuples of scaling parameters in each direction such that all associated dyadic rectangles are less than or equal to $\tilde{R}$

\begin{align}
L = \{ (j,j') : | \tilde{R} | \geq 2^{-(j + j')} \}
\label{eq:23}
\end{align}

For each $\vec{u} \in U$ let $\vec{u}(0)$ be the scaling parameter in direction $j$ and let $\vec{u}(1)$ be the scaling parameter in direction $j'$. Then one has

\begin{align}
& \sum_{\vec{u} \in U} \sum_{k=1}^{2^{\vec{u}(0)}} \sum_{k'=1}^{2^{\vec{u}(1)}} f(\vec{x}_1) - f(\vec{x}_2) - f(\vec{x}_3) + f(\vec{x}_4) = 0 \nonumber \\
& \leq d_d(x,y) d'_d(x',y')
\label{eq:24}
\end{align}

In the other case, also define $\vec{l}(0)$ be the scaling parameter in direction $j$ and let $\vec{l}(1)$ be the scaling parameter in direction $j'$. Then one has

\begin{align}
& \sum_{\vec{l} \in L} \sum_{k=1}^{2^{\vec{l}(0)}} \sum_{k'=1}^{2^{\vec{l}(1)}} f(\vec{x}_1) - f(\vec{x}_2) - f(\vec{x}_3) + f(\vec{x}_4) \nonumber \\
& \leq \sum_{\vec{l} \in L} C_{\vec{l}} 2^{-(\vec{l}(0) + \vec{l}(1))(2\alpha + \epsilon))} \nonumber \\
& = C 2^{-\vec{l}_m(2\alpha + \epsilon)} \nonumber \\
&  \leq d_d(x,y)^{2\alpha}d'_d(x',y')^{2\alpha}
\label{eq:25}
\end{align}

where $\vec{l}_m \coloneq \text{min} \{ (\vec{l}(0) + \vec{l}(1)) : \vec{l} \in L \}$. The first step comes from the assumption on the coefficients of $f(x,x')$, the second step from collapsing the geometric series, and the last step by construction of $L$ since $\tilde{R} = d_d(x,y)d'_d(x',y')$.

\end{proof}

\begin{lemma}
Let $\beta^{j,j'}_{k,k'}$ be an expansion coefficient of $f(x,x')$ in an orthonormal tensor  basis supported in the rectangle $I^j_k \times I^{j'}_{k'}$. If $f(x,x') \in \Lambda_{2 \alpha}([0,1]^2)$ then $|\beta^{j,j'}_{k,k'}| \leq 2^{-(j+j')(2\alpha + \frac{1}{2})} \hspace{0.2 cm} \forall \hspace{0.2 cm} I^j_k \times I^{j'}_{k'} \in [0,1]^2$.
\label{lem:3.12}
\end{lemma}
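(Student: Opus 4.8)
The plan is to establish the direction converse to Lemma~\ref{lem:3.11}: mixed $2\alpha$-Hölder regularity forces the tensor coefficients to decay like $2^{-(j+j')(2\alpha+\frac12)}$. This is the tensorised analogue of the vanishing-moment estimate behind Lemma~\ref{lem:3.9}, so I would imitate that proof. I take $\beta^{j,j'}_{k,k'}$ first to be the tensor wavelet coefficient $\beta^{j,j'}_{k,k'}=\int_{I^j_k\times I^{j'}_{k'}}f(y,y')\,\psi^j_k(y)\,\psi^{j'}_{k'}(y')\,dy\,dy'$ --- the coefficient appearing in the norm (\ref{eq:9}) and hence in Theorem~\ref{thm:1} --- and handle the remaining tensor basis elements afterwards. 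The single structural fact to exploit is that the mother wavelet has vanishing mean, so $\int_{I^j_k}\psi^j_k=0$ and $\int_{I^{j'}_{k'}}\psi^{j'}_{k'}=0$; consequently $\psi^j_k\times\psi^{j'}_{k'}$ annihilates every function of the form $g(y)+h(y')$.

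The core computation is short. Fix any reference point $(a,a')\in I^j_k\times I^{j'}_{k'}$. Applying the two vanishing-mean identities, I would record
\begin{align}
\beta^{j,j'}_{k,k'}=\int_{I^j_k\times I^{j'}_{k'}}\big[f(y,y')-f(y,a')-f(a,y')+f(a,a')\big]\,\psi^j_k(y)\,\psi^{j'}_{k'}(y')\,dy\,dy',
\end{align}
which is valid because each of the three subtracted terms pairs to zero against $\psi^j_k\times\psi^{j'}_{k'}$. Now estimate the bracket with the mixed-Hölder inequality (\ref{eq:6}) applied with exponent $2\alpha$: since $y$ and $a$ lie in the common dyadic interval $I^j_k$ one has $d_d(y,a)\le|I^j_k|=2^{-j}$ and likewise $d'_d(y',a')\le 2^{-j'}$, so the bracket is at most $C\,2^{-2j\alpha}2^{-2j'\alpha}$ with $C$ the mixed-Hölder constant of $f$ from (\ref{eq:6}). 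Pulling this factor out and bounding the wavelet factors by Cauchy--Schwarz, $\int_{I^j_k}|\psi^j_k|\le|I^j_k|^{1/2}\lVert\psi^j_k\rVert_{L^2}=2^{-j/2}$ and $\int_{I^{j'}_{k'}}|\psi^{j'}_{k'}|\le 2^{-j'/2}$, yields $|\beta^{j,j'}_{k,k'}|\le C\,2^{-2j\alpha-2j'\alpha}\,2^{-j/2-j'/2}=C\,2^{-(j+j')(2\alpha+\frac12)}$, which is the claim (after the normalisation that makes the constant equal to $1$).

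I do not expect a genuinely hard step; the points needing care are bookkeeping. First, the dyadic-distance bound $d_d(y,a)\le|I^j_k|$ for two points of a common dyadic interval should be stated explicitly from (\ref{eq:5}), and the support and normalisation convention for $\psi^j_k,\psi^{j'}_{k'}$ on $[0,1]$ recorded so the Cauchy--Schwarz constants are as claimed. Second, and this is the only place the argument genuinely differs from the wavelet case, for a basis element using the scaling function in one coordinate --- say $\phi^j_k\times\psi^{j'}_{k'}$ --- only $\int\psi^{j'}_{k'}=0$ is available, so one may subtract only the $y'$-independent reference $f(y,a')$ and must invoke the one-directional condition (\ref{eq:8}) (respectively (\ref{eq:7}) for $\psi^j_k\times\phi^{j'}_{k'}$) with exponent $2\alpha$; this still produces the improved $2^{-2j'\alpha}$ decay in the wavelet direction, which is all that is used downstream since the norm (\ref{eq:9}) sees only the $\psi\times\psi$ coefficients. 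Combined with Lemma~\ref{lem:3.11}, this closes the equivalence stated as Lemma~\ref{lemma:1.2}.
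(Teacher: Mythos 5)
Your proposal is correct and follows essentially the same route as the paper: both use the vanishing moments of $\psi^j_k\times\psi^{j'}_{k'}$ to replace $f$ by the mixed difference against a reference corner, bound that difference by the mixed $2\alpha$-H\"older condition over a dyadic rectangle of side lengths $2^{-j},2^{-j'}$, and absorb the wavelet normalisation via Cauchy--Schwarz to land on $2^{-(j+j')(2\alpha+\frac12)}$ (the paper squares the coefficient and applies Cauchy--Schwarz in $L^2$, you bound the bracket in $L^\infty$ and the wavelet in $L^1$ --- the same estimate either way). Your added remark that the $\phi\times\psi$ and $\psi\times\phi$ elements only admit one vanishing-mean subtraction and hence need conditions (\ref{eq:7})--(\ref{eq:8}) is a point the paper's proof glosses over, but it does not change the argument.
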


\begin{proof}
Suppose $f \in \Lambda_{2 \alpha} ([0,1]^2)$. Let $(x,x'), (y,x'), (x,y'), (y,y') \in I^j_k \times I^{j'}_{k'} \subset [0,1]^2$ be the vertices of an arbitrary rectangle. Then the following inequality holds:

\begin{align}
& | \beta^{j,j'}_{k,k'} |^2 = |<f(x,x'), \psi^j_k(x) \times \psi^{j'}_{k'}(x')>|^2 \nonumber \\
& = ( \int_{I^j_k \times I^{j'}_{k'}} f(x,x') \psi^j_k(x) \times \psi^{j'}_{k'}(x') dx dx' )^2 \nonumber \\
& = (\int_{I^j_k \times I^{j'}_{k'}} (f(x,x') - f(y,x')  - f(x,y') + f(y,y')) \psi^j_k(x) \times \psi^{j'}_{k'}(x') dx dx')^2 \nonumber \\
& \leq \lVert (f(x,x') - f(y,x')  - f(x,y') + f(y,y')) \rVert_{L^2(I^j_k \times I^{j'}_{k'})}^2  \lVert  \psi^j_k(x) \times \psi^{j'}_{k'}(x') \rVert_{L^2(I^j_k \times I^{j'}_{k'})}^2 \nonumber \\
& = (\int_{I^j_k \times I^{j'}_{k'}} (f(x,x') - f(y,x')  - f(x,y') + f(y,y'))^2 dx dx') \nonumber \\
& \leq \int_{I^j_k \times I^{j'}_{k'}} C_{(j,j')}^2 (d_d(x,y)^{2\alpha} d'_d(x',y')^{2\alpha})^2 dx dx' \nonumber \\
& = C_{(j,j')}^2 d_d(x,y)^{4\alpha} d'_d(x,y)^{4\alpha} d_d(x,y)^{\alpha}d'_d(x,y)^{\alpha} \nonumber \\
& =  C_{(j,j')}^2 2^{-(j+j')(4\alpha + 1)}  \nonumber \\
\label{eq:26}
\end{align}

Thus, $| \beta^{j,j'}_{k,k'} | \leq 2^{-(j+j')(2\alpha + \frac{1}{2})}$ since $\lVert  \psi^j_k(x) \times \psi^{j'}_{k'}(x')  \rVert_{L^2(I^j_k \times I^{j'}_{k'})} = 1$ and the selection of the dyadic rectangle is arbitrary, the result holds for all dyadic rectangles, thus completing the proof.

\end{proof}

If we strengthen the assumptions in Lemma \ref{lem:3.11} by requiring $\epsilon \geq \frac{1}{2}$ we obtain the following bidirectional result.

\begin{lemma}
Suppose $\beta^{j,j'}_{k,k'} $ is an expansion coefficient of $f(x,x')$ in an orthonormal tensor basis supported on the dyadic rectangle $I^j_k \times I^{j'}_{k'}$. One obtains $ |\beta^{j,j'}_{k,k'}| \leq 2^{-(j+j')(2\alpha + \epsilon)}, \epsilon \geq \frac{1}{2} \hspace{0.2 cm} \forall \hspace{0.2 cm} I^j_k \times I^{j'}_{k'} \in [0,1]^2$ \text{iff} $f(x,x') \in \Lambda_{2\alpha}([0,1]^2)$
\label{lem:3.13}
\end{lemma}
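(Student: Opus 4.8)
The plan is to deduce this biconditional by pairing the two one-directional lemmas already in hand: Lemma~\ref{lem:3.11} supplies the passage from coefficient decay to $\Lambda_{2\alpha}$ regularity, and Lemma~\ref{lem:3.12} supplies the converse. The only real work is reconciling the exponent $\epsilon$, and it is precisely this reconciliation that forces the threshold $\epsilon \geq \frac{1}{2}$. As in the phrasing of Lemma~\ref{lem:3.11}, I would read the coefficient condition as asserting the existence of some admissible $\epsilon \geq \frac{1}{2}$ for which the bound holds uniformly over all dyadic rectangles.

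First I would prove the ``only if'' direction. Assume $|\beta^{j,j'}_{k,k'}| \leq 2^{-(j+j')(2\alpha+\epsilon)}$ for every dyadic rectangle $I^j_k \times I^{j'}_{k'} \in [0,1]^2$ with some $\epsilon \geq \frac{1}{2}$. Then in particular $\epsilon > 0$, which is exactly the hypothesis of Lemma~\ref{lem:3.11}: expanding $f$ in the orthonormal tensor basis and applying the associated operators realizes it, up to normalization constants absorbed into the $\beta^{j,j'}_{k,k'}$, as a superposition of dyadic-rectangle-supported pieces carrying the stated coefficient bound. Lemma~\ref{lem:3.11} then yields $f \in \Lambda_{2\alpha}([0,1]^2)$ with no further estimate required.

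Next I would prove the ``if'' direction. Assume $f \in \Lambda_{2\alpha}([0,1]^2)$. Lemma~\ref{lem:3.12} gives $|\beta^{j,j'}_{k,k'}| \leq 2^{-(j+j')(2\alpha+\frac{1}{2})}$ for every dyadic rectangle. Taking $\epsilon = \frac{1}{2}$, this is precisely the asserted decay with an admissible exponent, so the two implications close into the claimed equivalence.

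The point worth emphasizing — and the reason this is sharp rather than a throwaway corollary — is that the two halves push $\epsilon$ in opposite directions: Lemma~\ref{lem:3.11} accepts any $\epsilon > 0$ (a larger $\epsilon$ only strengthens its hypothesis and does not weaken its conclusion), whereas Lemma~\ref{lem:3.12} produces no exponent better than $\epsilon = \frac{1}{2}$. Hence $\epsilon = \frac{1}{2}$ is the unique value at which the decay estimate is simultaneously implied by and sufficient for $\Lambda_{2\alpha}$ regularity, and it is the witness furnished in both directions. I do not anticipate a substantive obstacle; the only item to check carefully is that the orthonormal tensor basis appearing in the hypothesis of Lemma~\ref{lem:3.11} is the same class of object as the tensor-wavelet basis used in Lemma~\ref{lem:3.12}, which it is, since both lemmas are stated for an arbitrary orthonormal tensor basis supported on dyadic rectangles.
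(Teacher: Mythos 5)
Your proposal is correct and follows the same route as the paper, which simply combines Lemma~\ref{lem:3.11} (coefficient decay with $\epsilon>0$ implies $\Lambda_{2\alpha}$ regularity) with Lemma~\ref{lem:3.12} (regularity implies decay with $\epsilon=\tfrac{1}{2}$). Your version is more careful than the paper's one-line proof about how the quantifier on $\epsilon$ must be read for both directions to close, but the underlying argument is identical.
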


\begin{proof}

Set $\epsilon \geq \frac{1}{2}$ to observe the bidirectional implication of Lemmas \ref{lem:3.11} and \ref{lem:3.12}.

\end{proof}

As we use the tensor wavelet, tensor scaling-wavelet, tensor wavelet-scaling, and tensor scaling convolution operators to construct the tensor paraproduct decomposition, we provide the following corollary to contextualize Lemma \ref{lem:3.13}.

\begin{corollary}
Suppose $\alpha^{j,j'}_{k,k'}, \omega^{j,j'}_{k,k'}, \eta^{j,j'}_{k,k'}$ are expansion coefficients of $f(x,x')$ in the orthonormal tensor wavelet, tensor scaling-wavelet, and tensor scaling bases each supported on the dyadic rectangle $I^j_k \times I^{j'}_{k'}$. One obtains $|\alpha^{j,j'}_{k,k'}|, |\omega^{j,j'}_{k,k'}|, |\eta^{j,j'}_{k,k'}| \leq 2^{-(j+j')(2\alpha + \epsilon)}, \epsilon \geq \frac{1}{2} \hspace{0.2 cm} \forall \hspace{0.2 cm} I^j_k \times I^{j'}_{k'} \in [0,1]^2$ \text{iff} $f(x,x') \in \Lambda_{2\alpha}([0,1]^2)$
\label{cor:3.14}
\end{corollary}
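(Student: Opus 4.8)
The plan is to deduce Corollary \ref{cor:3.14} directly from Lemma \ref{lem:3.13} by observing that the three families $\alpha^{j,j'}_{k,k'}$, $\omega^{j,j'}_{k,k'}$, $\eta^{j,j'}_{k,k'}$ are all instances of the generic coefficient $\beta^{j,j'}_{k,k'}$ appearing in that lemma, since each is an expansion coefficient of $f(x,x')$ against an $L^2$-normalized orthonormal tensor product of a father/mother wavelet on $I^j_k$ with a father/mother wavelet on $I^{j'}_{k'}$, all supported in the dyadic rectangle $I^j_k \times I^{j'}_{k'}$. The only content beyond a verbal identification is checking that both directions of the equivalence survive the specialization, and that the same $\epsilon \geq \tfrac{1}{2}$ works simultaneously for all three families.

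First I would handle the forward direction ($f \in \Lambda_{2\alpha}([0,1]^2) \implies$ decay). Here I invoke Lemma \ref{lem:3.12}, which gives $|\beta^{j,j'}_{k,k'}| \leq 2^{-(j+j')(2\alpha+\frac12)}$ for \emph{any} $L^2$-normalized orthonormal tensor basis element supported in $I^j_k \times I^{j'}_{k'}$; applying it in turn with the wavelet $\psi^j_k \times \psi^{j'}_{k'}$, the scaling-wavelet $\phi^j_k \times \psi^{j'}_{k'}$ (and symmetrically $\psi^j_k \times \phi^{j'}_{k'}$), and the scaling $\phi^j_k \times \phi^{j'}_{k'}$ yields the bound with $\epsilon = \tfrac12$ for $\alpha^{j,j'}_{k,k'}$, $\omega^{j,j'}_{k,k'}$, $\eta^{j,j'}_{k,k'}$ respectively. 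Since $2^{-(j+j')(2\alpha+\frac12)} \leq 2^{-(j+j')(2\alpha+\epsilon)}$ fails for $\epsilon>\tfrac12$ but the lemma we want only asserts existence of such an $\epsilon$, taking $\epsilon=\tfrac12$ suffices; the proof of Lemma \ref{lem:3.12} uses only Cauchy--Schwarz, the mixed-Hölder estimate, and unit $L^2$-norm, all of which hold verbatim for $\phi\times\psi$, $\psi\times\phi$, $\phi\times\phi$ in place of $\psi\times\psi$ because it never used vanishing moments of $\psi$.

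For the reverse direction (decay $\implies f \in \Lambda_{2\alpha}([0,1]^2)$), I would note that the hypothesis on each of the three coefficient families is exactly the hypothesis of Lemma \ref{lem:3.11} with $\epsilon \geq \tfrac12 > 0$, provided we can write $f$ as a sum of characteristic functions on dyadic rectangles with these coefficients. This is the reconstruction formula for the tensor wavelet expansion: $f = \sum_{j,k,j',k'} \alpha^{j,j'}_{k,k'}\,\psi^j_k\otimes\psi^{j'}_{k'} + (\text{scaling-wavelet terms}) + (\text{scaling terms})$, and on each dyadic rectangle the partial sums are piecewise constant for the Haar-type basis, so Lemma \ref{lem:3.11} applies to conclude $\Lambda_{2\alpha}$ regularity. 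Concretely I would say: by Lemma \ref{lem:3.13} applied with $\beta^{j,j'}_{k,k'}$ equal to $\alpha^{j,j'}_{k,k'}$, then $\omega^{j,j'}_{k,k'}$, then $\eta^{j,j'}_{k,k'}$, the stated decay is equivalent to $f \in \Lambda_{2\alpha}([0,1]^2)$, and combining the three equivalences gives the corollary.

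The main obstacle, such as it is, is cosmetic rather than mathematical: one must make sure the orthonormal tensor basis referenced in Lemma \ref{lem:3.13} genuinely covers all three of the families named in the corollary, i.e. that ``orthonormal tensor basis supported on $I^j_k\times I^{j'}_{k'}$'' is read broadly enough to include scaling $\times$ scaling, scaling $\times$ wavelet, and wavelet $\times$ wavelet products. Since Definition \ref{defn:3.6} already normalizes all four tensor products to unit $L^2$-norm on the rectangle, and Lemmas \ref{lem:3.11} and \ref{lem:3.12} are stated for a generic such basis element, this identification is immediate and the corollary follows with essentially no further work. I would keep the proof to two or three sentences, as the authors do, rather than re-deriving anything.

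\begin{proof}
This is Lemma \ref{lem:3.13} applied separately to each of the three coefficient families. Indeed, $\alpha^{j,j'}_{k,k'} = \langle f, \psi^j_k \times \psi^{j'}_{k'}\rangle$, $\omega^{j,j'}_{k,k'} = \langle f, \phi^j_k \times \psi^{j'}_{k'}\rangle$ (resp. $\langle f, \psi^j_k \times \phi^{j'}_{k'}\rangle$), and $\eta^{j,j'}_{k,k'} = \langle f, \phi^j_k \times \phi^{j'}_{k'}\rangle$ are each expansion coefficients of $f(x,x')$ in an $L^2$-normalized orthonormal tensor basis supported on $I^j_k \times I^{j'}_{k'}$, as recorded in Definition \ref{defn:3.6}. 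Hence by Lemma \ref{lem:3.13}, taking $\beta^{j,j'}_{k,k'}$ to be $\alpha^{j,j'}_{k,k'}$, then $\omega^{j,j'}_{k,k'}$, then $\eta^{j,j'}_{k,k'}$, the bound $|\beta^{j,j'}_{k,k'}| \leq 2^{-(j+j')(2\alpha+\epsilon)}$ with $\epsilon \geq \frac12$ for all $I^j_k \times I^{j'}_{k'} \in [0,1]^2$ is equivalent to $f(x,x') \in \Lambda_{2\alpha}([0,1]^2)$ in each case. For the forward implication one may in fact take $\epsilon = \frac12$ by Lemma \ref{lem:3.12}, whose proof uses only Cauchy--Schwarz, the mixed-Hölder estimate on $f$, and the unit $L^2$-normalization, none of which require vanishing moments of the wavelets and all of which therefore apply equally to $\phi^j_k \times \psi^{j'}_{k'}$, $\psi^j_k \times \phi^{j'}_{k'}$, and $\phi^j_k \times \phi^{j'}_{k'}$. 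Conjoining the three equivalences yields the claimed biconditional.
\end{proof}
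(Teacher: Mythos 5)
Your proof is correct and takes essentially the same route as the paper, which disposes of the corollary in one line as a ``special case of Lemma \ref{lem:3.13}''; you simply spell out the specialization of the generic coefficient $\beta^{j,j'}_{k,k'}$ to each of the three tensor families and note that Lemma \ref{lem:3.12} supplies the forward direction with $\epsilon = \tfrac12$ uniformly. The extra care you take in checking that the argument never uses vanishing moments of $\psi$ is a reasonable addition but not something the paper itself records.
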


\begin{proof}

Special case of Lemma \ref{lem:3.13}

\end{proof}

\section{Tensor Paraproducts}

Here we explicitly construct our continuous and multiscale tensor paraproduct decompositions. We use the modifier \emph{tensor} to emphasize that we use distinct scaling parameters for each axis of $[0,1]^2$ to construct the paraproduct, as opposed to a single scaling parameter. We begin by stating the 1D paraproduct result by Bony.

\begin{lemma}
Suppose $A \in C^{\infty}(\mathbb{R})$ and $f \in \Lambda_{\alpha}(\mathbb{R}^d)$. Bony showed in \cite{bony1981calcul} $T : f \to A(f)$ can be written as:

\begin{align}
\Pi_{A'(P^{(j < C)}(f))}: f  \to A'(P^{(j < C)}(f))(P^{(j \geq C)}(f)) + R(A,f) 
\label{eq:27}
\end{align}

where $P^{(j < C)}(f)$ and $P^{(j \geq C)}(f)$ select the low and high frequencies of $f$, respectively which are separated by some threshold frequency $C$ and $R(A,f)$ is a smooth residual.

\label{lem:4.1}
\end{lemma}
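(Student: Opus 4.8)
The plan is the classical Bony argument: write $A(f)$ as a telescoping sum of composition increments along the dyadic scale, peel off its first-order (paraproduct) part, and show that the quadratic Taylor remainder is twice as regular as $f$. First I would use the reconstruction relation $Q^{j}=P^{j+1}-P^{j}$ of Remark~\ref{rem:3.7} together with $P^{j}(f)\to f$ as $j\to\infty$ to write
\begin{align}
A(f)=A\big(P^{0}(f)\big)+\sum_{j\ge 0}\Big[A\big(P^{j+1}(f)\big)-A\big(P^{j}(f)\big)\Big],
\label{eq:pp1}
\end{align}
and then, since $A\in C^{\infty}$, expand each increment by the first-order Taylor formula,
\begin{align}
A\big(P^{j+1}(f)\big)-A\big(P^{j}(f)\big)=A'\big(P^{j}(f)\big)\,Q^{j}(f)+\tfrac12\,A''(\theta_{j})\,\big(Q^{j}(f)\big)^{2},
\label{eq:pp2}
\end{align}
with $\theta_{j}$ lying between $P^{j}(f)$ and $P^{j+1}(f)$.

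Summing (\ref{eq:pp2}) over $j$ and substituting into (\ref{eq:pp1}) exhibits $A(f)$ as the linear sum $\sum_{j\ge 0}A'(P^{j}(f))\,Q^{j}(f)$ --- this is the paraproduct, the low-frequency symbol $A'(P^{j}(f))$ being $A'$ evaluated on $f$ smoothed to scale $j$, which the statement abbreviates as $A'(P^{(j<C)}(f))(P^{(j\ge C)}(f))$ --- plus $A(P^{0}(f))$ plus the quadratic remainder $\tfrac12\sum_{j\ge 0}A''(\theta_{j})(Q^{j}(f))^{2}$. Hence $R(A,f)$ is the sum of $A(P^{0}(f))$, this quadratic series, and a symbol-comparison series $\sum_{j}[A'(P^{j}(f))-P^{j}(A'(f))]Q^{j}(f)$ that appears only if one matches the precise form of the paraproduct in the statement. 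It then remains to prove $R(A,f)\in\Lambda_{2\alpha}$. The term $A(P^{0}(f))$ is a smooth function of a fixed band-limited function and hence lies in every Holder class. For the quadratic series I would use $\lVert Q^{j}(f)\rVert_{\infty}\lesssim 2^{-j\alpha}\lVert f\rVert_{\Lambda_{\alpha}}$ --- a consequence of Lemma~\ref{lem:3.9} --- to bound each summand $(Q^{j}(f))^{2}$ in sup norm by a constant multiple of $2^{-2j\alpha}\lVert f\rVert_{\Lambda_{\alpha}}^{2}$, while noting that, being built from pieces living at scales $\le j+1$, it is spectrally localized at scale $2^{j}$; the symbol-comparison summands obey the same $2^{-2j\alpha}$ bound because $A'(P^{j}(f))$ and $P^{j}(A'(f))$ both approximate $A'(f)$ to within $O(2^{-j\alpha})$ in sup norm. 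Collapsing the geometric series in $j$ --- exactly the mechanism used in the proof of Lemma~\ref{lem:3.11} --- produces for $R(A,f)$ a bound $|\beta^{\ell}|\lesssim 2^{-\ell(2\alpha+\epsilon)}$ on its wavelet coefficients with some $\epsilon>0$, whereupon Lemma~\ref{lem:3.11} (in its one-dimensional form) yields $R(A,f)\in\Lambda_{2\alpha}$ with norm at most $C_{A}\lVert f\rVert_{\Lambda_{\alpha}}$, in line with (\ref{eq:4}); this is the decomposition (\ref{eq:27}). One could instead run the computation in the continuous scale parameter via $A(f)-A(P^{t_{0}}(f))=\int_{0}^{t_{0}}A'(P^{t}(f))\,\partial_{t}P^{t}(f)\,dt$, the form closest to the continuous tensor paraproduct constructed next, while in Bony's original Fourier setting these estimates are packaged by the Littlewood--Paley characterization of the Holder class $\Lambda_{2\alpha}$.

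The hard part will be the frequency bookkeeping that turns ``sup norm $\lesssim 2^{-2j\alpha}$ together with localization at scale $2^{j}$'' into genuine $\Lambda_{2\alpha}$ membership: one has to confirm that the products $(Q^{j}(f))^{2}$ and $[A'(P^{j}(f))-P^{j}(A'(f))]Q^{j}(f)$ really carry wavelet content essentially only up to scale $j+O(1)$ --- which holds because multiplying functions assembled from scales $\le j$ creates new content only up to scale $j+O(1)$ --- and that the mild low-frequency overlap between consecutive summands does not destroy the geometric decay; this is exactly the work underlying Lemma~\ref{lem:3.11}. A secondary, routine point is the uniform control of $A''(\theta_{j})$ and of the symbol comparison, for which it suffices that $P^{j}(f)$ stay in a fixed compact set on which $A''$ is bounded --- immediate from $\lVert P^{j}(f)\rVert_{\infty}\le\lVert f\rVert_{\infty}$ --- together with tracking that every constant depends on $A$ only through $\lVert A'\rVert_{\infty}$ and $\lVert A''\rVert_{\infty}$, which is what gives the clean bound $C_{A}\lVert f\rVert_{\Lambda_{\alpha}}$.
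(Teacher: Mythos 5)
The paper offers no proof of this lemma at all --- it is stated as Bony's classical result and deferred to \cite{bony1981calcul} --- so there is nothing to compare line by line; your sketch is the standard paralinearization argument, and it closely parallels the strategy the paper itself deploys for the tensor analogue in Theorem \ref{thm:4.8}: telescope over dyadic scales, Taylor-expand each increment to peel off $A'(P^{j}(f))\,Q^{j}(f)$, and show the remainder gains a factor $2^{-2j\alpha}$ so that a coefficient-decay criterion in the spirit of Lemma \ref{lem:3.11} (equivalently, the Littlewood--Paley characterization of $\Lambda_{2\alpha}$, valid here since $2\alpha<1$) applies. The frequency bookkeeping you flag for $(Q^{j}(f))^{2}$ and for the symbol comparison is precisely the technical content that the citation to Bony absorbs, so your proposal is a correct and faithful outline of the intended argument rather than a gap.
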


Lemma \ref{lem:4.1} had remarkable implications for problems in nonlinear PDE, which we illustrate with the following example.

\begin{example}
Consider the problem $\frac{d}{dt} f(t) = e^{f(t)}$. Assume $f(t) \in \Lambda_{\alpha}(\mathbb{R}^d)$. Then the nonlinear action of $e$ on $f(t)$ is replaced by $e^{(P^{(j < C)}(f))}P^{(j \geq C)}(f)$ which is a constant multiplied by the high frequencies of $f$, transforming the ODE $\frac{d}{dt} f(t) = e^{f(t)}$ into a simpler form: $\frac{d}{dt} f(t) = e^{(P^{(j < C)}(f))}P^{(j \geq C)}(f) = C_fP^{(j \geq C)}(f) $
\label{ex:4.2}
\end{example}

We now state the continuous version of our tensor paraproduct decomposition with the following lemma:

\begin{lemma}

Suppose  $A \in \mathcal{C}^2(\mathbb{R})$, $A(0)=0$, and $ f \in \Lambda_\alpha([0,1]^2), 0 < \alpha < \frac{1}{2}$, then $A(f)$ can be rewritten as:

\begin{align}
& A_{(t,t')}(f) =  \int_0^{\infty}\int_0^{\infty}  t' t \left[ A'(P^tP'^{t'}(f)) [\frac{\partial }{\partial t'} \frac{\partial }{\partial t} (P^{t}P'^{t'}(f))] \right.  \nonumber \\
& + \left. A''(P^tP'^{t'}(f)) [\frac{\partial }{\partial t'} (P^tP'^{t'}(f))] [\frac{\partial }{\partial t}(P^{t}P'^{t'}(f))] \right]  \frac{dt}{t}  \frac{dt'}{t'} \nonumber \\
& + A(P^0P'^{\infty}(f)) + A(P^{\infty}P'^{0}(f)) - A(P^{\infty}P'^{\infty}(f))
\label{eq:28}
\end{align}

replacing the operator $T : f \to A(f)$ with the continous paraproduct $\Pi^{(t,t')}_{(A',A'')} : f \to A_{(t,t')}(f)$

\end{lemma}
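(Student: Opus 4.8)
The plan is to obtain \eqref{eq:28} from the two–parameter fundamental theorem of calculus applied to the scalar-valued function of the two scale parameters
\[
G(t,t') := A\bigl(P^{t}P'^{t'}(f)\bigr),
\]
combined with the chain rule, the hypothesis $A\in\mathcal{C}^{2}(\mathbb{R})$ being exactly what supplies the required second mixed derivative. First I would pin down the endpoints of the scale parameters: the fine-scale resolution identity $P^{t}P'^{t'}(f)\to f$ as $t,t'\to 0$ (so that $G(0,0)=A(f)$), and the coarse-scale limits, which define $P^{\infty}P'^{t'}(f)$, $P^{t}P'^{\infty}(f)$ and $P^{\infty}P'^{\infty}(f)$. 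Since $f\in\Lambda_{\alpha}([0,1]^2)$ is bounded on $[0,1]^2$ (its mixed and one-sided differences are controlled by $d_d^{\alpha}\le 1$), the family $\{P^{t}P'^{t'}(f)\}$ is uniformly bounded, so $A'$ and $A''$ are only ever evaluated on a fixed compact interval, on which they are bounded; the hypothesis $A(0)=0$ is what lets me handle the coarse-scale boundary terms (under the normalisation of $\phi^{t}_{u}$, $P^{t}P'^{t'}(f)$ degenerates as $t$ or $t'\to\infty$).

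Next I would write $u(t,t'):=P^{t}P'^{t'}(f)$ and use that $(t,t')\mapsto u$ is jointly $\mathcal{C}^{1}$ (from the smoothness of $\phi$ and \eqref{eq:16}); the chain rule then gives
\[
\frac{\partial}{\partial t'}G = A'(u)\,\frac{\partial u}{\partial t'},\qquad
\frac{\partial^{2} G}{\partial t\,\partial t'} = A''(u)\,\frac{\partial u}{\partial t}\,\frac{\partial u}{\partial t'} + A'(u)\,\frac{\partial^{2} u}{\partial t\,\partial t'},
\]
which is precisely the bracketed integrand of \eqref{eq:28}, the measures agreeing because $t\,t'\,\frac{dt}{t}\,\frac{dt'}{t'}=dt\,dt'$. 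Integrating $\partial_{t}\partial_{t'}G$ over $(0,\infty)^{2}$ and applying Fubini together with the one-dimensional fundamental theorem of calculus in each slot yields
\[
\iint_{(0,\infty)^{2}}\frac{\partial^{2} G}{\partial t\,\partial t'}\,dt\,dt' = G(\infty,\infty)-G(0,\infty)-G(\infty,0)+G(0,0).
\]
Rearranging and substituting $G(0,0)=A(f)$, $G(0,\infty)=A(P^{0}P'^{\infty}(f))$, $G(\infty,0)=A(P^{\infty}P'^{0}(f))$, $G(\infty,\infty)=A(P^{\infty}P'^{\infty}(f))$, and finally inserting the chain-rule expansion into the integral reproduces \eqref{eq:28}; this identifies $A_{(t,t')}(f)$ with the resulting right-hand side and so realizes the transformation $\Pi^{(t,t')}_{(A',A'')}: f\to A_{(t,t')}(f)$.

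The main obstacle is analytic rather than algebraic: making the two-parameter fundamental theorem of calculus and the interchanges above rigorous, i.e.\ showing that $(t,t')\mapsto u(t,t')$ is jointly $\mathcal{C}^{1}$ with $\partial_{t}\partial_{t'}u$ continuous and that $\partial_{t}\partial_{t'}G$ is absolutely integrable on $(0,\infty)^{2}$ so that Fubini and the plane-wise fundamental theorem of calculus apply. Integrability at the fine end $t,t'\to 0$ is exactly where the mixed $\alpha$-Hölder regularity of $f$ is used: $\partial_{t}P^{t}$ behaves like a continuous wavelet-type projection — the continuous counterpart of the relation $Q^{j}=P^{j+1}-P^{j}$ of Remark \ref{rem:3.7} — so $\partial_{t}u$, $\partial_{t'}u$ and $\partial_{t}\partial_{t'}u$ inherit the decay in $t,t'$ dictated by the (continuous analogue of the) coefficient bounds in Lemma \ref{lem:3.10}, which together with the boundedness of $A'$ and $A''$ controls the integrand near the origin; at the coarse end one uses the decay of the same quantities coming from the $L^{2}$-normalisation of $\phi^{t}_{u}$ on the compact domain $[0,1]$. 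Once these differentiability and convergence facts are in place, the identity is precisely the fundamental-theorem-of-calculus plus chain-rule computation above, and no further structure of $A$ beyond $\mathcal{C}^{2}$ and $A(0)=0$ is needed.
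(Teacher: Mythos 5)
Your proposal is correct and follows essentially the same route as the paper's (very terse) proof: apply the two-parameter fundamental theorem of calculus to $G(t,t')=A(P^{t}P'^{t'}(f))$, expand $\partial_{t}\partial_{t'}G$ by the chain rule using $A\in\mathcal{C}^{2}$, use $A(f)=\lim_{t,t'\to 0}A(P^{t}P'^{t'}(f))$ for the fine-scale endpoint, and absorb the coarse-scale corners into the boundary terms (which vanish by $A(0)=0$ and Remark \ref{rem:4.4}). Your additional attention to the integrability and joint differentiability needed to justify the Fubini/FTC steps goes beyond what the paper records, but it is a refinement of, not a departure from, the same argument.
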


\begin{proof}
The proof follows by writing the action of $A$ on the mixed difference between continuous tensor scaling  convolution operators, observing $A(f) = \lim_{t \to 0} A(P^tP'^{t'}(f))$, and the fundamental theorem of calculus with respect to the scaling parameters $(t,t')$.
\end{proof}

\begin{remark}
The functions $A(P^0P'^{\infty}(f)) + A(P^{\infty}P'^{0'}(f)) - A(P^{\infty}P'^{\infty'}(f))$ evaluate to 0 since convolution with the scaling function at $t = \infty$ yields 0
\label{rem:4.4}
\end{remark}

\begin{remark}
If $A$ is nonlinear, Lemma 4.3 quasilinearizes $A$ by replacing $A$ with its derivative evaluated at the averages of $f$ i.e. $A'(P^tP'^{t'}(f))$ and $A''(P^tP'^{t'}(f))$. Furthermore, $\frac{\partial }{\partial t'} \frac{\partial }{\partial t} (P^{t}P'^{t'}(f))$ can be interpreted as the tensor wavelet coefficients of $f$ and $\frac{\partial }{\partial t'} (P^tP'^{t'}(f))\frac{\partial }{\partial t}(P^{t}P'^{t'}(f))$ as the product of the wavelet coefficients of $f$ in the $t$ and $t'$ directions. 
\label{rem:4.5}
\end{remark}

\begin{remark}
Since differentiation of $A$ extracts its high frequency content and the continuous tensor scaling operators extract the low frequency content of $f$, the terms $A'(P^tP'^{t'}(f))$ and $A''(P^tP'^{t'}(f))$ represent the high frequency content of $A$ acting on the low frequency content of $f$ and represent a high-low type paraproduct, while the wavelet coefficients, $\frac{\partial }{\partial t'} \frac{\partial }{\partial t} (P^{t}P'^{t'}(f)), \frac{\partial }{\partial t'} (P^tP'^{t'}(f))\frac{\partial }{\partial t}(P^{t}P'^{t'}(f))$, extract the high frequency content of $f$. 
\label{rem:4.6}
\end{remark}

\begin{remark}
Lemmas 4.1 and 4.3 can be used to delineate between paraproducts and what we call tensor paraproducts. In Lemma 4.1, even though the support of $f$ is $d$-dimensional, one scaling parameter is used to seperate the low and high frequency content. However in Lemma 4.3, we use two scales for each axis of the $2$-dimensional support of $f$, allowing for finer analytical control on each axis.
\label{rem:4.7}
\end{remark}

The multiscale tensor paraproduct decomposition is more involved as the continuous scaling parameters $(t,t')$ need to be discretized, requiring one to make an approximation. To achieve this, we construct a bilinear interpolation between the scaling parameters of the tensor scaling convolution operators such that the fundamental theorem of calculus can be applied, then make an approximation to circumvent integration with resepect to the scaling parameters. More work is required, but it permits one to implement the multiscale decomposition computationally.

\begin{theorem}
Suppose  $A \in \mathcal{C}^2(\mathbb{R})$, $ f \in  \Lambda_\alpha([0,1]^2), 0 < \alpha < \frac{1}{2}$, then for the operator $T: f \to A(f)$ we can approximate $A(f)$ with

\begin{align}
&  \tilde{A}_{(N,N')}(f) =  \sum_{j=0}^{N} \sum_{j'=0}^{N'}  A'(P^jP'^{j'}(f)) Q^jQ'^{j'}(f) + A''(P^jP'^{j'}(f))Q^jP'^{j'}(f)P^jQ'^{j'}(f)  
\label{eq:29}
\end{align}

such that the multiscale tensor paraproduct transforms $T : f \to A(f)$ to 

\begin{align}
\Pi^{(N,N')}_{(A',A'')} : f \to \tilde{A}_{(N,N')}(f) + \Delta_{(N,N')}(A,f)
\label{eq:30}
\end{align}

where $\Delta_{(N,N')}(A,f) = A(f) - \tilde{A}_{(N,N')}(f) \in \Lambda_{2\alpha}([0,1]^2)$ is the residual which has twice the regularity of $f$ and 

\begin{align}
\lVert \Delta_{(N,N')}(A,f) \rVert_{\Lambda_{2\alpha}([0,1]^2)} \leq C_A \lVert f \rVert_{\Lambda_{\alpha}([0,1]^2)}
\label{eq:31}
\end{align}

\label{thm:4.8}
\end{theorem}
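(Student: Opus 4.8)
The plan is to realise $\tilde{A}_{(N,N')}(f)$ as the principal part of a telescoping reconstruction of $A(f)$ and to identify the residual $\Delta_{(N,N')}(A,f)$ with an accumulation of second--order Taylor remainders, which I then bound at the level of dyadic--rectangle coefficients and feed into Lemma \ref{lem:3.13}. First, using $Q^{j}=P^{j+1}-P^{j}$ (Remark \ref{rem:3.7}) together with its tensor form $Q^{j}Q'^{j'}=(P^{j+1}-P^{j})(P'^{j'+1}-P'^{j'})$, I abbreviate $a^{j,j'}:=P^{j}P'^{j'}(f)$ and introduce the mixed second difference in the scale parameters
\begin{align*}
(\delta_{j}\delta_{j'}A)^{j,j'}:=A(a^{j+1,j'+1})-A(a^{j+1,j'})-A(a^{j,j'+1})+A(a^{j,j'}).
\end{align*}
Summation by parts (the discrete counterpart of the fundamental theorem of calculus used in the continuous analogue, Lemma \ref{lem:4.1}) gives $\sum_{j=0}^{N}\sum_{j'=0}^{N'}(\delta_{j}\delta_{j'}A)^{j,j'}=A(P^{N+1}P'^{N'+1}(f))$ modulo the three coarsest--scale boundary terms, which are killed by $A(0)=0$ and the normalization of $P^{0},P'^{0}$ exactly as in Remark \ref{rem:4.4}; letting the resolution reach the scale of $f$ identifies the left side with $A(f)$.

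Next I Taylor--expand each $(\delta_{j}\delta_{j'}A)^{j,j'}$ to second order about $a^{j,j'}$, which is legitimate because $A\in\mathcal{C}^{2}$. Writing $b:=Q^{j}P'^{j'}(f)=a^{j+1,j'}-a^{j,j'}$, $c:=P^{j}Q'^{j'}(f)=a^{j,j'+1}-a^{j,j'}$ and $e:=Q^{j}Q'^{j'}(f)=a^{j+1,j'+1}-a^{j+1,j'}-a^{j,j'+1}+a^{j,j'}$, the expansion collapses to
\begin{align*}
(\delta_{j}\delta_{j'}A)^{j,j'}=A'(a^{j,j'})\,e+A''(a^{j,j'})\,b\,c+r^{j,j'},
\end{align*}
where the first two terms are exactly the summands of $\tilde{A}_{(N,N')}(f)$ in \eqref{eq:29}, and $r^{j,j'}=\tfrac12 A''(a^{j,j'})\bigl(e^{2}+2be+2ce\bigr)+\rho^{j,j'}$ absorbs the remaining quadratic terms and the genuine second--order Taylor remainder $\rho^{j,j'}$, which I keep in Peano/integral form since no third derivative of $A$ is available. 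Summing over $0\le j\le N$, $0\le j'\le N'$ therefore yields $\Delta_{(N,N')}(A,f)=A(f)-\tilde{A}_{(N,N')}(f)=\sum_{j,j'} r^{j,j'}$ together with the smooth boundary contributions.

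The crux is the coefficient estimate. Each of $a^{j,j'},b,c,e$ is constant on every dyadic rectangle $I^{j}_{k}\times I^{j'}_{k'}$ (it is an average, or a difference of averages, of $f$), $A$ sends piecewise--constant functions to piecewise--constant functions, and products preserve this, so $r^{j,j'}$---hence $\Delta_{(N,N')}(A,f)$---has the form $\sum \beta^{j,j'}_{k,k'}\chi_{I^{j}_{k}\times I^{j'}_{k'}}$ demanded by Lemma \ref{lem:3.11}. On $I^{j}_{k}\times I^{j'}_{k'}$ the values of $b,c,e$ are precisely the tensor wavelet--scaling, scaling--wavelet, and wavelet coefficients of $f$ on that rectangle, so Lemma \ref{lem:3.10} (equivalently Corollary \ref{cor:3.14}) bounds each of them by $\lVert f\rVert_{\Lambda_{\alpha}}\,2^{-(j+j')(\alpha+\frac12)}$; boundedness of $f$ on $[0,1]^{2}$ bounds $a^{j,j'}$ and hence $A''(a^{j,j'})$, and the modulus of continuity of $A''$ controls $\rho^{j,j'}$. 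Since every summand of $r^{j,j'}$ carries at least two of the factors $b,c,e$,
\begin{align*}
\bigl|\beta^{j,j'}_{k,k'}\bigr|\le C_{A}\,\lVert f\rVert_{\Lambda_{\alpha}}\,2^{-(j+j')(2\alpha+1)}\qquad\text{for all } I^{j}_{k}\times I^{j'}_{k'}\subset[0,1]^{2},
\end{align*}
one factor of $\lVert f\rVert_{\Lambda_{\alpha}}$ from the quadratic structure being absorbed into $C_{A}$ as is standard for paralinearization remainders. This is exactly the hypothesis of Lemma \ref{lem:3.13} with $\epsilon=1\ge\tfrac12$, so $\Delta_{(N,N')}(A,f)\in\Lambda_{2\alpha}([0,1]^{2})$; inserting the same coefficient bound into the tensor--wavelet characterization of the norm (Definition \ref{defn:3.3} and Lemma \ref{lem:3.12}) and summing the ensuing geometric series over scales---convergent thanks to the surplus $2^{-(j+j')}$ decay---gives \eqref{eq:31}.

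The step I expect to be the main obstacle is the coefficient estimate itself: checking that the Taylor remainder genuinely contains two increment factors in each term, so that the single--$\alpha$ decay $2^{-(j+j')(\alpha+\frac12)}$ from Lemma \ref{lem:3.10} is upgraded to the $2^{-(j+j')(2\alpha+\epsilon)}$ with $\epsilon\ge\frac12$ needed for Lemma \ref{lem:3.13}, and carrying this through uniformly in $(j,j',k,k')$ with only $A\in\mathcal{C}^{2}$ available---so that $\rho^{j,j'}$ must be handled via the modulus of continuity of $A''$ rather than a third derivative. The remaining ingredients---the summation by parts, the disposal of the coarsest--scale boundary terms via $A(0)=0$, and the geometric--series bookkeeping in the last step---are routine once this estimate is in hand.
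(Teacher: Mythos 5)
Your proposal is correct and shares the paper's skeleton: the telescoping sum of mixed differences $A(P^{j+1}P'^{j'+1}(f))-A(P^{j}P'^{j'+1}(f))-A(P^{j+1}P'^{j'}(f))+A(P^{j}P'^{j'}(f))$, extraction of the principal part $A'(P^jP'^{j'}(f))Q^jQ'^{j'}(f)+A''(P^jP'^{j'}(f))Q^jP'^{j'}(f)P^jQ'^{j'}(f)$, and a coefficient bound $|\beta^{j,j'}_{k,k'}|\le C_A\,2^{-(j+j')(2\alpha+1)}$ on dyadic rectangles fed into Lemma \ref{lem:3.13}. Where you genuinely diverge is the middle step. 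The paper introduces the bilinear interpolation operator $h_{\mu,\omega}(j,j',f)$, writes each mixed difference exactly as $\int_0^1\int_0^1 \tfrac{\partial}{\partial\mu}\tfrac{\partial}{\partial\omega} A(P^jP'^{j'}(f)+h_{\mu,\omega}(j,j',f))\,d\mu\,d\omega$, and defines the residual as the error committed by discarding the $\mu,\omega$-dependence inside $A'$, $A''$ and inside $\mathbf{v}_2^{(j,j')}$; you instead Taylor-expand each mixed difference to second order about $P^jP'^{j'}(f)$, so the residual appears directly as $\tfrac12 A''(a^{j,j'})(e^2+2be+2ce)+\rho^{j,j'}$. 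Your route is more elementary and makes immediately visible why every residual term carries at least two increment factors, hence the $2^{-(j+j')(2\alpha+1)}$ decay that Proposition \ref{prop:5.1} recovers term by term from the four pieces of the integral representation; the paper's route is an exact identity up to a single approximation step and is the discretization of the continuous statement in Lemma 4.3. Two cautions. First, $A(0)=0$ is a hypothesis of Lemma 4.3 but not of Theorem \ref{thm:4.8}, so you cannot invoke it to kill the coarse-scale boundary terms $A(P^{0}P'^{N'+1}(f))$, $A(P^{N+1}P'^{0}(f))$, $A(P^{0}P'^{0}(f))$; these are constants but not zero in general (the paper has the same loose end, disposing of them only under an extra mean-zero assumption on $f$). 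Second, with only $A\in\mathcal{C}^2$ your Peano remainder $\rho^{j,j'}$ must be controlled by the uniform modulus of continuity of $A''$ on a compact interval containing all values of the averages of $f$ (which exists since $f$ is bounded); you note this, and it suffices for the stated estimate, but it is the one place where uniformity in $(j,j',k,k')$ needs to be said explicitly.
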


\begin{proof}

First we construct the approximation $ \tilde{A}_{(N,N')}(f)$ to build $\Pi^{(N,N')}_{(A',A'')}$ while characterization of the residual comes in section 5. We buiild the approximation in three steps: (1) express $A(f)$ as a telescoping series comprised of the mixed difference between the scaling parameters of the tensor scaling convolution operators (2) construct a bilinear interpolation between the scaling parameters of the tensor scaling convolution operators, permitting one to apply the fundamental theorem of calculus (3) build the approximation by circumventing integration with respect to the scaling parameters. We proceed with step (1):

\begin{align}
& \sum_{j=0}^{N} \sum_{j'=0}^{N'}  A(P^{j+1}P'^{j'+1}(f)) - A(P^{j}P^{j'+1}(f)) - A(P^{j+1}P'^{j'}(f)) + A(P^{j}P'^{j'}(f)) \nonumber \\
&  = A(P^{N+1}P'^{N'+1}(f)) - A(P^0 P'^{N'+1}(f)) - A(P^{N+1}P'^{0'}(f)) + A(P^0P'^{0'}(f)) \nonumber \\
& = A(f) - C_{0,N'+1} - C_{N+1,0} + C_{0,0} + O(2^{-((N+2) + (N'+2))}) \nonumber \\
& = A_{(N,N')}(f)
\label{eq:32}
\end{align}

The first step comes from collapsing the telescoping series and the second step comes from noting $P^{j}P'^{j'}(f) \to f$ as $j \to \infty, j' \to  \infty $. Note that under the assumption $ \int_0^1 f(y,y') dy = 0$ or $\int_0^1 f(y,y') dy' = 0$, the constants vanish and $A(f)$ is equivalent to the first line modulo a constant on the order of the smallest dyadic rectangle. Next, we define a bilinear interpolation between the scaling parameters of the tensor scaling convolution operators such that evaluation of the interpolation parameters at the boundaries recovers the mixed difference:

\begin{definition}
We call $h_{\mu,\omega}(j,j',f)$ the bilinear tensor scaling interpolation operator ($\mu,\omega \in [0,1]$), and define its action on $f$ by:

\begin{align}
& h_{\mu,\omega}(j,j',f) \coloneq  h(\omega, \mu, P^{j+1}P'^{j'+1}(f), P^{j}P'^{j'+1}(f), P^{j+1}P'^{j'}(f), P^{j}P'^{j'}(f)) \nonumber \\
& \coloneq  \omega( P^{j+1}P'^{j'}(f) - P^{j}P'^{j'}(f) ) + \mu  ((P^{j}P'^{j'+1}(f) +  \nonumber \\
& \omega(P^{j+1}P'^{j'+1}(f) - P^{j}P'^{j'+1}(f))) - \nonumber \\ 
& (P^{j}P'^{j'}(f) + \omega(P^{j+1}P'^{j'}(f) - P^{j}P'^{j'}(f))))
\label{eq:33}
\end{align}

\label{def:4.9}
\end{definition}

\begin{remark}
 $h_{\mu,\omega}(j,j',f)$ is necessary to construct the multiscale tensor paraproduct, since it allows one to connect the mixed difference between the scales of the tensor scaling convolution operators to the fundamental theorem of calculus. It can be conceptually understood as a bilinear interpolation between the vertices of a rectangle, where the vertices are the scaling parameters. The interpolation parameter $\omega$ is the interpolates between scales in direction $j$ while the interpolation parameter $\mu$ interpolates between scales in direction $j'$ of the tensor scaling convolution operators. 
 \label{rem:4:10}
\end{remark}

Let's proceed writing $A_{(N,N')}(f)$ in terms of the mixed difference yields:

\begin{align}
& A_{(N,N')}(f) = \sum_{j=0}^{N} \sum_{j'=0}^{N'}  A(P^{j+1}P'^{j'+1}(f)) - A(P^{j}P'^{j'+1}(f)) - A(P^{j+1}P'^{j'}(f)) + A(P^{j}P'^{j'}(f)) \nonumber \\
&  = \sum_{j=0}^{N} \sum_{j'=0}^{N'} \int_0^1 \int_0^1 \frac{\partial}{\partial \mu} \frac{\partial}{\partial \omega} A(P^jP'^{j'}(f) + h_{\mu,\omega}(j,j',f)) d\mu d\omega \nonumber \\
& = \sum_{j=0}^{N} \sum_{j'=0}^{N'} \int_0^1 \int_0^1 \frac{\partial}{\partial \mu} (A'(P^jP'^{j'}(f) + h_{\mu,\omega}(j,j',f))  \frac{\partial}{\partial \omega} (P^jP'^{j'}(f) + h_{\mu,\omega}(j,j',f)))  d\mu d\omega \nonumber \\
& = \sum_{j=0}^{N} \sum_{j'=0}^{N'} \int_0^1 \int_0^1 A'(P^jP'^{j'}(f) + h_{\mu,\omega}(j,j',f))\frac{\partial}{\partial \mu} \frac{\partial}{\partial \omega} (P^jP'^{j'}(f) + h_{\mu,\omega}(j,j',f)) + \nonumber \\ 
& A''(P^jP'^{j'}(f) + h_{\mu,\omega}(j,j',f)) \frac{\partial}{\partial \mu} (P^jP'^{j'}(f) + h_{\mu,\omega}(j,j',f)) \frac{\partial}{\partial \omega} (P^jP'^{j'}(f) + h_{\mu,\omega}(j,j',f))d\mu d\omega \nonumber \\
\label{eq:34}
\end{align}

where the first step is an identity, the second step comes from applying the fundamental theorem of calculus, and the subsequent steps come from application of the partial derivatives. Computing the partial derivatives, one obtains:

\begin{align}
&\frac{\partial}{\partial \mu} (P^jP'^{j'}(f) + h_{\mu,\omega}(j,j',f)) = (P^jP'^{j' + 1}(f) + \omega(P^{j+1}P'^{j' + 1}(f) - P^jP'^{j' + 1}(f))) \nonumber \\
& - (P^jP'^{j'}(f) + \omega(P^{j+1}P'^{j'}(f) - P^jP'^{j'}(f)))
\label{eq:35}
\end{align}

\begin{align}
&\frac{\partial}{\partial \omega} (P^jP'^{j'}(f) + h_{\mu,\omega}(j,j',f)) = \nonumber \\
& (P^{j+1}P'^{j'}(f) - P^jP'^{j'}(f)) + \mu ((P^{j+1}P'^{j'+1}(f) - P^jP'^{j'+ 1}(f)) - (P^{j+1}P'^{j'}(f) - P^jP'^{j'}(f)) )
\label{eq:36}
\end{align}

\begin{align}
& \frac{\partial}{\partial \mu}\frac{\partial}{\partial \omega} (P^jP'^{j'}(f) + h_{\mu,\omega}(j,j',f)) =  P^{j+1}P'^{j'+1}(f) - P^{j}P'^{j'+1}(f) - P^{j+1}P'^{j'}(f) + P^{j}P'^{j'}(f) 
\label{eq:37}
\end{align}

To circumvent integration with respect to $\mu,\omega$, we build an approximation which omits terms with dependence on those integration variables. First we define the multiplicative factors, $\mathbf{v}_1^{(j,j')}(f)$ and $\mathbf{v}_2^{(j,j')}(f)$ for the $A'$  and $A''$ terms, respectively:

\begin{align}
\mathbf{v}_1^{(j,j')}(f) \coloneq P^{j+1}P'^{j'+1}(f) - P^{j}P'^{j'+1}(f) - P^{j+1}P'^{j'}(f)) + P^{j}P'^{j'}(f) 
\label{eq:38}
\end{align} 

\begin{align}
& \mathbf{v}_2^{(j,j')}(f) \coloneq  (P^jP'^{j' + 1}(f) + \omega(P^{j+1}P'^{j' + 1}(f) - P^jP'^{j' + 1}(f))) \nonumber \\
& - (P^jP'^{j'}(f) + \omega(P^{j+1}P'^{j'}(f) - P^jP'^{j'}(f)))(P^{j+1}P'^{j'}(f) - P^jP'^{j'}(f)) + \nonumber \\
& \mu ((P^{j+1}P'^{j'+1}(f) - P^jP'^{j'+ 1}(f)) - (P^{j+1}P'^{j'}(f) - P^jP'^{j'}(f)) )
\label{eq:39}
\end{align}

Observe that $\mathbf{v}^{(j,j')}_1(f)$ is the mixed difference between scales of the tensor scaling convolution operators and $\mathbf{v}^{(j,j')}_2(f)$ is the product of the partial derivatives with respect to each interpolation parameter. This allows us to compactly express $A_{(N,N')}(f)$ as:

\begin{align}
& A_{(N,N')}(f) = \sum_{j=0}^{N} \sum_{j'=0}^{N'} \int_0^1 \int_0^1 A'(P^jP'^{j'}(f) + h_{\mu,\omega}(j,j',f))\mathbf{v}_1^{(j,j')}(f) + \nonumber \\ 
& A''(P^jP'^{j'}(f) + h_{\mu,\omega}(j,j',f))\mathbf{v}_2^{(j,j')}(f) d\mu d\omega \nonumber \\
\label{eq:40}
\end{align}

To circumvent integration with respect to the interpolation parameters $\mu,\omega$, we define $\tilde{\mathbf{v}}^{(j,j')}_2(f)$ to be an approximation to $\mathbf{v}^{(j,j')}_2(f)$.

\begin{align}
\mathbf{\tilde{v}}^{(j,j')}_2(f) \coloneq  (P^{j}P'^{j'+1}(f) - P^{j}P'^{j'}(f))(P^{j+1}P'^{j'}(f) - P^{j}P'^{j'}(f))  
\label{eq:41}
\end{align}

The approximation is made by only keeping the terms in each product of $\mathbf{v}^{(j,j')}_2(f)$ that don't have explicit dependence on the interpolation parameters $\mu,\omega$, yielding:

\begin{align}
& \tilde{A}_{(N,N')}(f) = \sum_{j=0}^{N} \sum_{j'=0}^{N'}  A'(P^jP'^{j'}(f))\mathbf{v}_1^{(j,j')}(f) + A''(P^jP'^{j'}(f))\tilde{\mathbf{v}}^{(j,j')}_2(f) \nonumber \\
\label{eq:42}
\end{align}

Recalling $Q^j = P^{j+1} - P^j$ from Remark \ref{rem:3.7}, we rewrite $\tilde{A}_{(N,N')}(f)$ as:

\begin{align}
& \tilde{A}_{(N,N')}(f) = \sum_{j=0}^{N} \sum_{j'=0}^{N'}  A'(P^jP'^{j'}(f))Q^jQ'^{j'}(f) +  A''(P^jP'^{j'}(f))P^jQ'^{j'}(f)Q^jP'^{j'}(f) 
\label{eq:43}
\end{align}

and the residual between $A_{(N,N')}(f)$ and its approximation $\tilde{A}_{(N,N')}(f)$:

\begin{align}
&\Delta_{N,N'}(A,f) = \sum_{j=0}^N \sum_{j'=0}^{N'} \int_0^1 \int_0^1 (A'(P^jP'^{j'}(f) + h_{\mu,\omega}(j,j',f))\mathbf{v}_1^{(j,j')}(f) - A'(P^jP'^{j'}(f)) \mathbf{v}_1^{(j,j')}(f)) + \nonumber \\
& (A''(P^jP'^{j'}(f)+ h_{\mu,\omega}(j,j',f))\mathbf{v}_2^{(j,j')}(f) - A''(P^jP'^{j'}(f))\tilde{\mathbf{v}}_2^{(j,j')}(f)) d\mu d \omega
\label{eq:44}
\end{align}

Appealing to Theorem \ref{prop:5.2} in section 5 completes the proof. 

\end{proof}

\begin{remark}
Observe the multiscale approximation, $\tilde{A}_{(N,N')}(f)$, resembles the continuous representation, $\tilde{A}_{(t,t')}(f)$; and has the same frequency separation properties in the following sense. The principal term $A'(P^jP'^{j'})$ is a high-low tensor paraproduct since $A'$ extracts the high frequency content of $A$ while $P^jP'^{j'}$ extracts the low-frequency content of $f$. The same reasoning applies for the second order term $A''(P^jP'^{j'})$. $Q^jQ^{j'}$ is a high-high tensor paraproduct since it extracts the high frequency content in both directions. Furthermore, $Q^jP'^{j'}$ is a high-low tensor paraproduct as it extracts the high frequency content of $f$ in direction $j$ and the low frequency content of $f$ in direction $j'$ while $P^jQ'^{j'}$ is a low-high tensor paraproduct since the directions of frequency extraction are reversed. 
\label{rem:4.11}
\end{remark}

Now that the nature of $\tilde{A}_{(N,N')}(f)$ has been characterized, we proceed with qualitative and quantitative characterization of the residual in the following section.

\section{Residual Estimates}

We characterize the residual by providing quantitative and qualitative estimates by measuring the residual in $L^{\infty}$. First we compress the representations of the functions $\textbf{v}_1^{(j,j')}(f), \textbf{v}_2^{(j,j')}(f), \tilde{\textbf{v}}_2^{(j,j')}(f)$ defined in the residual, $\Delta_{(N,N')}(A,f)$, computed in (\ref{eq:44}).

\begin{align}
\textbf{v}_1^{(j,j')}(f) \coloneq Q^jQ'^{j'}(f)
\label{eq:45}
\end{align}

\begin{align}
\textbf{v}_2^{(j,j')}(f) \coloneq (P^jQ'^{j'}(f) + \omega Q^jQ^{j'}(f))(Q^jP'^{j'}(f) + \mu Q^jQ^{j'}(f))
\label{eq:46}
\end{align}

\begin{align}
\tilde{\textbf{v}}_2^{(j,j')}(f) \coloneq Q^jP'^{j'}(f)P^jQ'^{j'}(f)
\label{eq:47}
\end{align}

These representations are obtained by recalling Remark \ref{rem:3.7} Now we state the first proposition characterizing the residual:

\begin{proposition}
    
If we measure the residual in $L^{\infty}$, we obtain:
\begin{align}
\lVert \Delta_{(N,N')}(A,f) \lVert_{L^{\infty}([0,1]^2)} = \sum_{j=0}^N \sum_{j'=0}^{N'} C_{(j,j')} 2^{-(j+j')(2\alpha + 1)}
\label{eq:48}
\end{align}
such that we can rewrite the residual as a sum of constant coefficients supported on dyadic rectangles:

\begin{align}
\Delta_{(N,N')}(A,f) \coloneq \sum_{j=0}^N \sum_{j'=0}^{N'} \sum_{k=1}^{2^j} \sum_{k'=1}^{2^{j'}}  \beta^{j,j'}_{k,k'} \chi_{I^j_k \times I^{j'}_{k'}}, | \beta^{j,j'}_{k,k'} | \leq C_{(j,j')}2^{-(j+j')(2\alpha + 1)}
\label{eq:49}
\end{align}

where $\chi_{I^j_k \times I^{j'}_{k'}}$ is the characteristic function on a dyadic rectangle.

\label{prop:5.1}
\end{proposition}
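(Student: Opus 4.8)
The plan is to reduce the statement to the observation that, at a fixed scale $(j,j')$, the corresponding summand of $\Delta_{(N,N')}(A,f)$ is constant on each dyadic rectangle $I^j_k\times I^{j'}_{k'}$. By the definitions (\ref{eq:11})--(\ref{eq:14}), each of $P^jP'^{j'}(f)$, $Q^jQ'^{j'}(f)$, $Q^jP'^{j'}(f)$, $P^jQ'^{j'}(f)$ is a finite sum of characteristic functions $\chi_{I^j_k\times I^{j'}_{k'}}$ whose coefficients are the corresponding tensor expansion coefficients of $f$; by (\ref{eq:33}) the interpolation operator inherits this structure, and using $Q^j=P^{j+1}-P^j$ from Remark \ref{rem:3.7} a short computation gives $h_{\mu,\omega}(j,j',f)=\omega\,Q^jP'^{j'}(f)+\mu\,P^jQ'^{j'}(f)+\mu\omega\,Q^jQ'^{j'}(f)$. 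Hence for fixed $(\mu,\omega)$ the $(j,j')$-summand of the integrand in (\ref{eq:44}) is constant on each $I^j_k\times I^{j'}_{k'}$, and integration over $(\mu,\omega)\in[0,1]^2$ keeps it constant; denoting that constant by $\beta^{j,j'}_{k,k'}$ gives the representation (\ref{eq:49}), so it remains only to prove the pointwise bound $|\beta^{j,j'}_{k,k'}|\le C_{(j,j')}\,2^{-(j+j')(2\alpha+1)}$.

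Before estimating I would fix the ambient range: the axis-wise Holder conditions (\ref{eq:7})--(\ref{eq:8}) force $f$ to be bounded on $[0,1]^2$, so every value of $P^jP'^{j'}(f)$ and of $P^jP'^{j'}(f)+h_{\mu,\omega}(j,j',f)$ lies in a fixed compact interval $K$ depending only on $\|f\|_{L^\infty}$ and the wavelet, and $M:=\|A''\|_{L^\infty(K)}<\infty$. By Lemma \ref{lem:3.10}, the value on $I^j_k\times I^{j'}_{k'}$ of each of $Q^jQ'^{j'}(f)$, $Q^jP'^{j'}(f)$, $P^jQ'^{j'}(f)$ is at most $\|f\|_{\Lambda_\alpha}\,2^{-(j+j')(\alpha+1/2)}$. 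Feeding this into the compressed forms (\ref{eq:45})--(\ref{eq:47}), together with $|\mu|,|\omega|\le1$, gives on each such rectangle $|h_{\mu,\omega}(j,j',f)|,\,|\mathbf{v}_1^{(j,j')}(f)|\le C\,2^{-(j+j')(\alpha+1/2)}$, while $\mathbf{v}_2^{(j,j')}(f)$, $\tilde{\mathbf{v}}_2^{(j,j')}(f)$ and their difference each expand into sums of products of two such coefficients and are therefore $\le C\,2^{-(j+j')(2\alpha+1)}$; here $C$ absorbs $\|f\|_{\Lambda_\alpha}$.

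The coefficient bound now follows bracket by bracket in (\ref{eq:44}). For the $A'$ bracket, the mean value theorem on $K$ gives $|A'(P^jP'^{j'}(f)+h_{\mu,\omega})-A'(P^jP'^{j'}(f))|\le M\,|h_{\mu,\omega}(j,j',f)|\le C\,2^{-(j+j')(\alpha+1/2)}$, and multiplying by $|\mathbf{v}_1^{(j,j')}(f)|$ yields $O(2^{-(j+j')(2\alpha+1)})$. For the $A''$ bracket I would add and subtract $A''(P^jP'^{j'}(f)+h_{\mu,\omega})\tilde{\mathbf{v}}_2^{(j,j')}(f)$, writing the bracket as $A''(P^jP'^{j'}(f)+h_{\mu,\omega})\big(\mathbf{v}_2^{(j,j')}(f)-\tilde{\mathbf{v}}_2^{(j,j')}(f)\big)+\big(A''(P^jP'^{j'}(f)+h_{\mu,\omega})-A''(P^jP'^{j'}(f))\big)\tilde{\mathbf{v}}_2^{(j,j')}(f)$; the first term is $\le M\cdot C\,2^{-(j+j')(2\alpha+1)}$, and for the second it is enough to use the crude bound $|A''(P^jP'^{j'}(f)+h_{\mu,\omega})-A''(P^jP'^{j'}(f))|\le 2M$, because $\tilde{\mathbf{v}}_2^{(j,j')}(f)$ already carries the full $2^{-(j+j')(2\alpha+1)}$ decay. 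Integrating over $(\mu,\omega)\in[0,1]^2$ and collecting constants gives $|\beta^{j,j'}_{k,k'}|\le C_A\,2^{-(j+j')(2\alpha+1)}$ uniformly in $k,k'$, with $C_A$ depending only on $\|A''\|_{L^\infty(K)}$ and $\|f\|_{\Lambda_\alpha}$; setting $C_{(j,j')}:=2^{(j+j')(2\alpha+1)}\sup_{k,k'}|\beta^{j,j'}_{k,k'}|\le C_A$ and applying the triangle inequality over the finite double sum yields (\ref{eq:48}), where the equality there is to be read as the corresponding sharp upper bound. The one step that genuinely needs care --- and the reason for organizing the $A''$ bracket through that particular cancellation --- is that $A\in\mathcal{C}^2$ makes $A''$ only continuous, not Lipschitz, so one must never try to control $|A''(a+h)-A''(a)|$ by $|h|$; everything else is bookkeeping with the operator identities of the previous sections.
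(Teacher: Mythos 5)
Your proof is correct and reaches the stated coefficient bound, but it extracts the crucial extra factor of decay by a genuinely different mechanism than the paper. The paper splits each bracket of (\ref{eq:44}) by the triangle inequality into four separate $L^{\infty}$ norms and then argues that each one individually carries the full $2^{-(j+j')(2\alpha+1)}$ decay, by rewriting $A'(P^jP'^{j'}(f))$ as a difference $A(P^j_kP'^{j'}_{k'}(f)(x))-A(P^j_kP'^{j'}_{k'}(f)(z))$ for two points of the same dyadic rectangle and invoking the regularity of $f$ (cf.\ (\ref{eq:52})--(\ref{eq:55})). You instead keep each bracket intact, use the closed form $h_{\mu,\omega}(j,j',f)=\omega\,Q^jP'^{j'}(f)+\mu\,P^jQ'^{j'}(f)+\mu\omega\,Q^jQ'^{j'}(f)$, apply the mean value theorem to $A'$ so that $A'(P^jP'^{j'}(f)+h_{\mu,\omega})-A'(P^jP'^{j'}(f))$ itself contributes a factor $2^{-(j+j')(\alpha+1/2)}$, and organize the $A''$ bracket so that the decay always comes from a product of two expansion coefficients while $A''$ is only ever used through its sup norm. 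This buys two things the paper's route does not: it needs nothing beyond boundedness of $A''$ on a compact range (your observation that $\mathcal{C}^2$ gives no Lipschitz control on $A''$ is exactly right, and the crude $2M$ bound paired with the decay already present in $\tilde{\mathbf{v}}_2^{(j,j')}(f)$ is the correct fix), and it avoids the paper's step in which the factor $\chi_{I^j_k\times I^{j'}_{k'}}(x)-\chi_{I^j_k\times I^{j'}_{k'}}(z)$ is evaluated at two points of the same rectangle, where the displayed quantity does not by itself produce the claimed gain. Your version is the standard paralinearization cancellation argument and is the sounder of the two; the only caveats are cosmetic --- you read the equality in (\ref{eq:48}) as a sharp upper bound (as the paper's own chain of inequalities also effectively does), and you take the per-rectangle constancy at scale $(j,j')$ from the paper's conventions in (\ref{eq:11})--(\ref{eq:14}) and Remark \ref{rem:3.7} rather than from the finer rectangles on which $P^{j+1}P'^{j'+1}(f)$ is literally constant.
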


\begin{proof}

First observe:

\begin{align}
& \lVert \Delta_{(N,N')}(A,f) \rVert_{L^{\infty}([0,1]^2)} = \lVert \sum_{j=0}^N \sum_{j'=0}^{N'} \int_0^1 \int_0^1 (A'(P^jP'^{j'}(f) + h_{\mu,\omega}(j,j',f))\mathbf{v}^{(j,j')}_1(f) - A'(P^jP'^{j'}(f)) \mathbf{{v}}_1^{(j,j')}(f)) + \nonumber \\
& A''(P^jP'^{j'}(f)+ h_{\mu,\omega}(j,j',f))\mathbf{v}_2^{(j,j')}(f) - A''(P^jP'^{j'}(f)) \mathbf{\tilde{v}}_2^{(j,j')}(f) d\mu d \omega \rVert_{L^{\infty}([0,1]^2)} \nonumber \\
& \leq \sum_{j=0}^N \sum_{j'=0}^{N'}  \lVert \int_0^1 \int_0^1 (A'(P^jP'^{j'}(f) + h_{\mu,\omega}(j,j',f))\mathbf{v}^{(j,j')}_1(f) - A'(P^jP'^{j'}(f)) \mathbf{{v}}_1^{(j,j')}(f)) + \nonumber \\
& A''(P^jP'^{j'}(f)+ h_{\mu,\omega}(j,j',f))\mathbf{v}_2^{(j,j')}(f) - A''(P^jP'^{j'}(f)) \mathbf{\tilde{v}}_2^{(j,j')}(f) d\mu d \omega \rVert_{L^{\infty}([0,1]^2)} \nonumber \\
& \leq \sum_{j=0}^N \sum_{j'=0}^{N'} \lVert \int_0^1 \int_0^1 A'(P^jP'^{j'}(f) + h_{\mu,\omega}(j,j',f))\mathbf{v}_1^{(j,j')}(f) d\mu d\omega \rVert_{L^{\infty}([0,1]^2)}  +  \nonumber \\
& \lVert \int_0^1 \int_0^1 A'(P^jP'^{j'}(f)) \mathbf{v}_1^{(j,j')}(f) d \mu d \omega \rVert_{L^{\infty}([0,1]^2)} + \nonumber \\
& \lVert \int_0^1 \int_0^1 A''(P^jP'^{j'}(f)+ h_{\mu,\omega}(j,j',f))\mathbf{v}^{(j,j')}_2(f) d \mu d \omega \rVert_{L^{\infty}([0,1]^2)} + \nonumber \\
&\lVert \int_0^1 \int_0^1 A''(P^jP'^{j'}(f)) \mathbf{\tilde{v}}^{(j,j')}_2(f) d \mu d\omega \rVert_{L^{\infty}([0,1]^2)}
\label{eq:50}
\end{align}

by triangle inequality. Estimating each of the terms in (\ref{eq:50}) seperately, we have:

\begin{align}
& \lVert \int_0^1 \int_0^1 A'(P^jP'^{j'}(f) + h_{\mu,\omega}(j,j',f))\mathbf{v}_1^{(j,j')}(f) d\mu d\omega \rVert_{L^{\infty}([0,1]^2)} \leq \nonumber \\
&  \lVert \sup_{\mu, \omega \in [0,1]} \int_0^1 \int_0^1  A'(P^jP'^{j'}(f) + h_{\mu,\omega}(j,j',f))\mathbf{v}_1^{(j,j')}(f) d\mu d\omega \rVert_{L^{\infty}([0,1]^2)} \nonumber \\ 
& = \lVert  A'(P^jP'^{j'}(f) + Q^jQ'^{j'}(f))Q^jQ'^{j'}(f) \rVert_{L^{\infty}([0,1]^2)} \nonumber \\ 
& \leq \sum_{k=1}^{2^j} \sum_{k'=1}^{2^{j'}} \lVert  A'(P_k^jP_{k'}^{j'}(f) + Q_k^jQ_{k'}'^{j'}(f))Q_k^jQ_{k'}'^{j'}(f) \rVert_{L^{\infty}(I^j_k \times I^{j'}_{k'})}
\label{eq:51}
\end{align}

where the last step holds by triangle inequality and we substitute $\mathbf{v}^{(j,j')}_1(f)$ with $Q^jQ'^{j'}(f)$. Now consider arbitrary $x,z \in I^j_k \times I^{j'}_{k'}$ for $k = 1, \ldots, 2^j, k' = 1, \ldots, 2^{j'}$ to obtain:

\begin{align}
& \sum_{k=1}^{2^j} \sum_{k'=1}^{2^{j'}} \lVert  A'(P_k^jP_{k'}'^{j'}(f) + Q_k^jQ_{k'}'^{j'}(f))Q_k^jQ_{k'}'^{j'}(f) \rVert_{L^{\infty}(I^j_k \times I^{j'}_{k'})} \nonumber \\ 
& = \sum_{k=1}^{2^j} \sum_{k'=1}^{2^{j'}} \lVert A(P_k^jP_{k'}'^{j'}(f)(x) + Q_k^jQ_{k'}'^{j'}(f)(x)) - A(P_k^jP_{k'}'^{j'}(f)(z) + Q_k^jQ_{k'}'^{j'}(f)(z)) Q_k^jQ_{k'}'^{j'}(f) \rVert_{L^{\infty}(I^j_k \times I^{j'}_{k'})} \nonumber \\
& \leq \sum_{k=1}^{2^j} \sum_{k'=1}^{2^{j'}}  \lVert C_{(j,j') }(P_k^jP_{k'}'^{j'}(f)(x) + Q_k^jQ_{k'}'^{j'}(f)(x)) - (P_k^jP_{k'}'^{j'}(f)(z) + Q_k^jQ_{k'}'^{j'}(f)(z)) Q_k^jQ_{k'}'^{j'}(f) \rVert_{L^{\infty}(I^j_k \times I^{j'}_{k'})}
\label{eq:52}
\end{align}

where the last two steps hold since $A \in \mathcal{C}^2(\mathbb{R})$ and the range of the codomain of $P^j_k P'^{j'}_{k'}(f)$ is finite since $f$ is supported on $I^j_k \times I^{j'}_{k'}$.  Let $\alpha^{(j,j')}_{(k,k')} \chi_{(I^j_k \times I^{j'}_{k'})} \coloneq Q_k^jQ_{k'}'^{j'}(f) , \eta^{(j,j')}_{(k,k')} \chi_{(I^j_k \times I^{j'}_{k'})} \coloneq P_k^jP_{k'}'^{j'}(f)$ be the the expansion coefficient supported on the characteristic rectangle for the mixed wavelet function convolution operator and mixed scaling function convolution operators, respectively. Rewriting equation (\ref{eq:52}) gives:

\begin{align}
& \sum_{k=1}^{2^j} \sum_{k'=1}^{2^{j'}} \lVert ((\eta^{(j,j')}_{(k,k')} + \alpha^{(j,j')}_{(k,k')})\chi_{(I^j_k \times I^{j'}_{k'})}(x) - (\eta^{(j,j')}_{(k,k')} + \alpha^{(j,j')}_{(k,k')})\chi_{(I^j_k \times I^{j'}_{k'})}(z)) \alpha^{(j,j')}_{(k,k')}\chi_{(I^j_k \times I^{j'}_{k'})} \rVert_{L^{\infty}(I^j_k \times I^{j'}_{k'})} \nonumber \\
& \leq \sum_{k=1}^{2^j} \sum_{k'=1}^{2^{j'}} C_{(j,j')} 2^{-(j+j')\alpha + \frac{1}{2}} 2^{-(j+j')\alpha + \frac{1}{2}} \nonumber \\
& = C_{(j,j')} 2^{-(j+j')(2\alpha + 1)}
\label{eq:53}
\end{align}

where the last step holds from Lemma \ref{lem:3.11}, yielding the desired result for the first term.

\begin{align}
\lVert \int_0^1 \int_0^1 A'(P^jP'^{j'}(f) + h_{\mu,\omega}(j,j',f))\mathbf{v}_1^{(j,j')}(f) d\mu d\omega \rVert_{L^{\infty}([0,1]^2)} = C_{(j,j')} 2^{-(j+j')(2\alpha + 1)}
\label{eq:54}
\end{align}

Proceeding with the second term gives:

\begin{align}
& \lVert \int_0^1 \int_0^1 A'(P^jP'^{j'}(f)) \mathbf{v}_1^{(j,j')}(f) d \mu d \omega \rVert_{L^{\infty}([0,1]^2)} = \lVert  A'(P^jP'^{j'}(f)) Q^jQ'^{j'}(f) \rVert_{L^{\infty}([0,1]^2)} \nonumber \\
& \leq \sum_{k=1}^{2^j} \sum_{k'=1}^{2^{j'}} \lVert A'(P_k^jP_{k'}'^{j'}(f)) Q_k^jQ_{k'}'^{j'}(f) \rVert_{L^{\infty}(I^j_k \times I^{j'}_{k'})} \nonumber \\
& = \sum_{k=1}^{2^j} \sum_{k'=1}^{2^{j'}} \lVert A(P_k^jP_{k'}'^{j'}(f)(x)) - A(P_k^jP_{k'}'^{j'}(f)(z)) Q_k^jQ_{k'}'^{j'}(f) \rVert_{L^{\infty}(I^j_k \times I^{j'}_{k'})} \nonumber \\
& \leq \sum_{k=1}^{2^j} \sum_{k'=1}^{2^{j'}}  \lVert C_{(j,j')}  P_k^jP_{k'}'^{j'}(f)(x) - P_k^jP_{k'}'^{j'}(f)(z) Q_k^jQ_{k'}'^{j'}(f) \rVert_{L^{\infty}(I^j_k \times I^{j'}_{k'})} \nonumber \\
& \leq \sum_{k=1}^{2^j} \sum_{k'=1}^{2^{j'}} \lVert C_{(j,j')} \eta^{(j,j')}_{(k,k')}(\chi_{(I^j_k \times I^{j'}_{k'})}(x) - \chi_{(I^j_k \times I^{j'}_{k'})}(z)) \alpha^{(j,j')}_{(k,k')}\chi_{(I^j_k \times I^{j'}_{k'})} \rVert_{L^{\infty}(I^j_k \times I^{j'}_{k'})} \nonumber \\
& = \sum_{k=1}^{2^j} \sum_{k'=1}^{2^{j'}} C_{(j,j')} 2^{-(j+j')(2\alpha + 1)}
\label{eq:55}
\end{align}

The estimate is obtained using the same arguments from the first estimate. Once the terms we measure in $L^{\infty}$ are broken down into their convolution operators, the associated expansion coefficient multiplied by the characteristic function supported on the dyadic rectangle is bounded by the measure of the dyadic rectangle to the power $\alpha$, yielding:

\begin{align}
\lVert \int_0^1 \int_0^1 A'(P^jP'^{j'}(f)) \mathbf{v}_1^{(j,j')}(f) d \mu d \omega \rVert_{L^{\infty}([0,1]^2)} = C_{(j,j')} 2^{-(j+j')(2\alpha + 1)}
\label{eq:56}
\end{align}

Estimates for the third and fourth terms in (\ref{eq:50}) are obtained with similar reasoning:

\begin{align}
& \lVert \int_0^1 \int_0^1 A''(P^jP'^{j'}(f)+ h_{\mu,\omega}(j,j',f))\mathbf{v}^{(j,j')}_2(f) d \mu d \omega \rVert_{L^{\infty}([0,1]^2)}  \nonumber \\
& \leq \lVert \sup_{\mu, \omega \in [0,1]} \int_0^1 \int_0^1 A''(P^jP'^{j'}(f)+ h_{\mu,\omega}(j,j',f))\mathbf{v}^{(j,j')}_2(f) d \mu d \omega \rVert_{L^{\infty}([0,1]^2)} \nonumber \\
& = \lVert A''(P^jP'^{j'}(f)+ Q^jQ'^{j'}(f))(P^jQ'^{j'}(f) +  Q^jQ'^{j'}(f))(Q^jP'^{j'}(f) +  Q^jQ'^{j'}(f)) \rVert_{L^{\infty}([0,1]^2)} \nonumber \\
& \leq \sum_{k=1}^{2^j} \sum_{k'=1}^{2^{j'}} \lVert A''(P_k^jP_{k'}'^{j'}(f)+ Q_k^jQ_{k'}'^{j'}(f))((P_k^jQ_{k'}'^{j'}(f) +  Q_k^jQ_{k'}'^{j'}(f))(Q_k^jP_{k'}'^{j'}(f) +  Q_k^jQ_{k'}'^{j'}(f)) \rVert_{L^{\infty}(I^j_k \times I^{j'}_{k'})} \nonumber \\
& = \sum_{k=1}^{2^j} \sum_{k'=1}^{2^{j'}}  \lVert  (A'(P_k^jP_{k'}'^{j'}(f)(x)+ Q_k^jQ_{k'}'^{j'}(f)(x)) -  A'(P_k^jP_{k'}'^{j'}(f)(z)+ Q_k^jQ_{k'}'^{j'}(f)(z))) \nonumber \\
& ((P_k^jQ_{k'}'^{j'}(f) +  Q_k^jQ_{k'}'^{j'}(f))(Q_k^jP_{k'}'^{j'}(f) +  Q_k^jQ_{k'}'^{j'}(f)) \rVert_{L^{\infty}(I^j_k \times I^{j'}_{k'})} \nonumber \\
& \leq \sum_{k=1}^{2^j} \sum_{k'=1}^{2^{j'}} \lVert   C_{(j,j')}((P_k^jP_{k'}'^{j'}(f)(x)+ Q_k^jQ_{k'}'^{j'}(f)(x)) -  (P_k^jP_{k'}'^{j'}(f)(z)+ Q_k^jQ_{k'}'^{j'}(f)(z))) \nonumber \\
& ((P_k^jQ_{k'}'^{j'}(f) +  Q_k^jQ_{k'}'^{j'}(f))(Q_k^jP_{k'}'^{j'}(f) +  Q_k^jQ_{k'}'^{j'}(f))) \rVert_{L^{\infty}(I^j_k \times I^{j'}_{k'})} \nonumber \\
& \leq  \sum_{k=1}^{2^j} \sum_{k'=1}^{2^{j'}} \lVert   C_{(j,j')} (\eta^{(j,j')}_{(k,k')} + \alpha^{(j,j')}_{(k,k')})\chi_{(I^j_k \times I^{j'}_{k'})}(x) -  (\eta^{(j,j')}_{(k,k')} + \alpha^{(j,j')}_{(k,k')})\chi_{(I^j_k \times I^{j'}_{k'})}(z) \nonumber \\
& (\hat{\omega}^{(j,j')}_{(k,k')} + \alpha^{(j,j')}_{(k,k')})\chi_{(I^j_k \times I^{j'}_{k'})}(\tilde{\omega}^{(j,j')}_{(k,k')} + \alpha^{(j,j')}_{(k,k')})\chi_{(I^j_k \times I^{j'}_{k'})} \rVert_{L^{\infty}(I^j_k \times I^{j'}_{k'})} \nonumber \\
& = C_{(j,j')}2^{-(j+j')(3\alpha + 1.5)}
\label{eq:57}
\end{align}

In the above estimate $\hat{\omega}^{(j,j')}_{(k,k')}  \chi_{(I^j_k \times I^{j'}_{k'})} \coloneq P_k^jQ'^{j'}_{k'}(f) $ is the expansion coefficient for the mixed scaling and wavelet function where the wavelet function is defined in direction $j'$ and the scaling function is defined in direction $j$ supported on the dyadic rectangle of where the tensor product between the scaling and wavelet functions are defined. We define $\tilde{\omega}^{(j,j')}_{(k,k')}  \chi_{(I^j_k \times I^{j'}_{k'})} \coloneq Q_k^jP'^{j'}_{k'}(f)$ similarly by switching the directions the wavelet and scaling functions are defined, yielding:

\begin{align}
& \lVert \int_0^1 \int_0^1 A''(P^jP'^{j'}(f)+ h_{\mu,\omega}(j,j',f))\mathbf{v}^{(j,j')}_2(f) d \mu d \omega \rVert_{L^{\infty}([0,1]^2)} \leq C_{(j,j')} 2^{-(j+j')(3\alpha + 1.5)}
\label{eq:58}
\end{align}

For the fourth term in (\ref{eq:50}) we have:

\begin{align}
& \lVert \int_0^1 \int_0^1 A''(P^j P'^{j'}(f)) \mathbf{\tilde{v}}^{(j,j')}_2(f) d\mu d\omega \rVert_{L^{\infty}([0,1]^2)} = \lVert  A''(P^j P'^{j'}(f)) \mathbf{\tilde{v}}^{(j,j')}_2 \rVert_{L^{\infty}([0,1]^2)} \nonumber \\
& = \lVert  A''(P^j P'^{j'}(f)) Q^jP'^{j'}(f)P^jQ'^{j'}(f)\rVert_{L^{\infty}([0,1]^2)} \nonumber \\
& \leq \sum_{k=1}^{2^j} \sum_{k'=1}^{2^{j'}} \lVert  A''(P_k^j P_{k'}'^{j'}(f)) Q_k^jP_{k'}'^{j'}(f)P_k^jQ_{k'}'^{j'}(f) \rVert_{L^{\infty}(I^j_k \times I^{j'}_{k'})} \nonumber \\
& =  \sum_{k=1}^{2^j} \sum_{k'=1}^{2^{j'}} \lVert A'(P_k^j P_{k'}'^{j'}(f)(x)) - A'(P_k^j P_{k'}'^{j'}(f)(z)) Q_k^jP_{k'}'^{j'}(f)P_k^jQ_{k'}'^{j'}(f)\rVert_{L^{\infty}(I^j_k \times I^{j'}_{k'})} \nonumber \\
& \leq \sum_{k=1}^{2^j} \sum_{k'=1}^{2^{j'}}  \lVert C_{(j,j')} ((P_k^j P_{k'}'^{j'}(f)(x) - P_k^j P_{k'}'^{j'}(f)(z)) Q_k^jP_{k'}'^{j'}(f)P_k^jQ_{k'}'^{j'}(f)\rVert_{L^{\infty}(I^j_k \times I^{j'}_{k'})} \nonumber \\
& = \sum_{k=1}^{2^j} \sum_{k'=1}^{2^{j'}} \lVert C_{(j,j')} \eta^{(j,j')}_{(k,k')}(\chi_{(I^j_k \times I^{j'}_{k'})}(x) - \chi_{(I^j_k \times I^{j'}_{k'})}(z)) \tilde{\omega}^{(j,j')}_{(k,k')})\hat{\omega}^{(j,j')}_{(k,k')})\chi_{(I^j_k \times I^{j'}_{k'})}\rVert_{L^{\infty}(I^j_k \times I^{j'}_{k'})} \nonumber \\
& = C_{(j,j')} 2^{-(j+j')(3\alpha + 1.5)}\nonumber \\
\label{eq:59}
\end{align}

Consequently,

\begin{align}
\lVert \int_0^1 \int_0^1 A''(P^j P'^{j'}(f)) \mathbf{\tilde{v}}^{(j,j')}_2(f) d\mu d\omega \rVert_{L^{\infty}([0,1]^2)}\leq C_{(j,j')} 2^{-(j+j')(2\alpha + 1)}
\label{eq:60}
\end{align}

Combining (\ref{eq:53}), (\ref{eq:55}), (\ref{eq:57}), and (\ref{eq:59}) yields the desired result:

\begin{align}
& \lVert \Delta_{(N,N')}(A,f) \rVert_{L^{\infty}([0,1]^2)} = \sum_{j=0}^N \sum_{j'=0}^{N'}  C_{(j,j')} (2^{-(j+j')(2\alpha + 1)} + \nonumber \\
&  2^{-(j+j')(2\alpha + 1)} +  2^{-(j+j')(2\alpha + 1)} +  2^{-(j+j')(2\alpha + 1)}) \nonumber \\
& = \sum_{j=0}^N \sum_{j'=0}^{N'}  C_{(j,j')} 2^{-(j+j')(2\alpha + 1)}
\label{eq:61}
\end{align}

\end{proof}

This leads to a proposition concerning the quantitative estimate of the residual:

\begin{proposition}
The residual has twice the regularity of $f$ such that $\Delta_{(N,N')}(A,f) \in \Lambda_{2\alpha}([0,1]^2)$ and the following estimate holds:

\begin{align}
\lVert \Delta_{(N,N')}(A,f) \rVert_{\Lambda_{2\alpha}([0,1]^2)} \leq C_A \lVert f \rVert_{\Lambda_{\alpha}([0,1]^2)}
\label{eq:62}
\end{align}
\label{prop:5.2}
\end{proposition}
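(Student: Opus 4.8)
The plan is to deduce both assertions from the concrete representation of the residual already supplied by Proposition~\ref{prop:5.1} together with the coefficient--regularity dictionary of Section~3. By Proposition~\ref{prop:5.1} the residual is a finite sum of constants against characteristic functions of dyadic rectangles,
\[
\Delta_{(N,N')}(A,f)=\sum_{j=0}^{N}\sum_{j'=0}^{N'}\sum_{k=1}^{2^{j}}\sum_{k'=1}^{2^{j'}}\beta^{j,j'}_{k,k'}\,\chi_{I^{j}_{k}\times I^{j'}_{k'}},\qquad |\beta^{j,j'}_{k,k'}|\le C_{(j,j')}\,2^{-(j+j')(2\alpha+1)},
\]
which is exactly the hypothesis of Lemma~\ref{lem:3.11} with decay exponent $2\alpha+\epsilon$ for $\epsilon=1$. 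Since $\epsilon=1\ge\tfrac12$, the forward implication of Lemma~\ref{lem:3.11} (equivalently Lemma~\ref{lem:3.13}) gives $\Delta_{(N,N')}(A,f)\in\Lambda_{2\alpha}([0,1]^2)$; the only adjustment is to carry the bounded prefactor $C_{(j,j')}$ through the geometric-series argument in the proof of Lemma~\ref{lem:3.11}, which goes through verbatim provided $\sup_{j,j'}C_{(j,j')}<\infty$, since $\sum_{\vec{\ell}}C\,2^{-(\vec{\ell}(0)+\vec{\ell}(1))(2\alpha+\epsilon)}$ still collapses to a constant multiple of $2^{-\vec{\ell}_{m}(2\alpha+\epsilon)}\le d_d(x,y)^{2\alpha}d'_d(x',y')^{2\alpha}$, and likewise for the one-dimensional differences.

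For the quantitative bound I would first pin down $\sup_{j,j'}C_{(j,j')}$ by tracing the proof of Proposition~\ref{prop:5.1}. There each $C_{(j,j')}$ comes from two places: (i) a first-order Taylor/Lipschitz bound for $A'$ (resp.\ $A''$) applied on the compact range $\{P^{j}_{k}P'^{j'}_{k'}(f)(x)+Q^{j}_{k}Q'^{j'}_{k'}(f)(x)\}$, which lies in a fixed interval of length controlled by $\|f\|_{L^{\infty}}\le c\,\|f\|_{\Lambda_{\alpha}}$, so this factor is at most $C_{A}:=\sup_{|t|\le c\|f\|_{\Lambda_{\alpha}}}(|A'(t)|+|A''(t)|)$; and (ii) tensor wavelet/scaling coefficient bounds for $f$, each of size $\le\|f\|_{\Lambda_{\alpha}}2^{-(j+j')(\alpha+1/2)}$ by Lemma~\ref{lem:3.10}. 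The two decaying factors of size $2^{-(j+j')(\alpha+1/2)}$ are precisely what has already been displayed as $2^{-(j+j')(2\alpha+1)}$, so what remains in front is $C_{(j,j')}\le C_{A}\,\|f\|_{\Lambda_{\alpha}}$, uniformly in $(j,j')$ (absorbing, if desired, one power of $\|f\|_{\Lambda_{\alpha}}$ into $C_{A}$ to match the stated form). Feeding $\sup_{j,j'}C_{(j,j')}\le C_{A}\|f\|_{\Lambda_{\alpha}}$ into the proof of Lemma~\ref{lem:3.11} shows that the mixed-Hölder and axiswise Hölder constants of $\Delta_{(N,N')}(A,f)$ in Definition~\ref{defn:3.2} are themselves bounded by a fixed multiple of $C_{A}\|f\|_{\Lambda_{\alpha}}$.

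It then remains to pass from that Hölder constant to the tensor-wavelet norm of Definition~\ref{defn:3.3}, which is immediate from Lemma~\ref{lem:3.12}: applied to $\Delta_{(N,N')}(A,f)\in\Lambda_{2\alpha}$, its proof (Cauchy--Schwarz against $\psi^{j}_{k}\times\psi^{j'}_{k'}$, then the mixed-Hölder estimate on the rectangle) produces $|\langle\Delta_{(N,N')}(A,f),\psi^{j}_{k}\times\psi^{j'}_{k'}\rangle|\le C_{\Delta}\,2^{-(j+j')(2\alpha+1/2)}$ with $C_{\Delta}$ the mixed-Hölder constant just estimated, i.e.\ $C_{\Delta}\le C_{A}\|f\|_{\Lambda_{\alpha}}$. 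Taking the supremum over $j,k,j',k'$ in Definition~\ref{defn:3.3} gives $\lVert\Delta_{(N,N')}(A,f)\rVert_{\Lambda_{2\alpha}([0,1]^2)}\le C_{A}\lVert f\rVert_{\Lambda_{\alpha}([0,1]^2)}$, as desired. (Alternatively one can bypass Lemma~\ref{lem:3.12} and estimate $\langle\Delta_{(N,N')}(A,f),\psi^{J}_{K}\times\psi^{J'}_{K'}\rangle$ directly from the characteristic-function representation: the vanishing moments of the wavelets annihilate the levels $j\le J$, $j'\le J'$, each surviving inner product is bounded by $\lVert\psi^{J}_{K}\rVert_{L^{1}}\lVert\psi^{J'}_{K'}\rVert_{L^{1}}\le C\,2^{-(J+J')/2}$ times $\max_{k,k'}|\beta^{j,j'}_{k,k'}|$, and summing the resulting double geometric series gives $C_{A}\|f\|_{\Lambda_{\alpha}}2^{-(J+J')(2\alpha+3/2)}$, with the extra factor $2^{-(J+J')}$ killing the normalization.) The main obstacle — the only genuinely delicate step — is the uniform control of $C_{(j,j')}$ in the previous paragraph: one must make sure this prefactor neither grows with $(j,j')$ nor carries more than one power of $\|f\|_{\Lambda_{\alpha}}$, which requires keeping careful track, inside Proposition~\ref{prop:5.1}, of which convolution factors are genuinely wavelet-type (hence decaying) as opposed to scaling-type (merely bounded by $\|f\|_{L^{\infty}}$), and of the $C^{2}$ character of $A$ restricted to the relevant compact range.
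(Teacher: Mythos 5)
Your proposal follows essentially the same route as the paper: both rest on the characteristic-function representation and coefficient bound from Proposition~\ref{prop:5.1}, invoke Lemma~\ref{lem:3.13} (via Lemma~\ref{lem:3.11}) for membership in $\Lambda_{2\alpha}([0,1]^2)$, and then compare tensor-wavelet coefficient decay rates in the norm of Definition~\ref{defn:3.3} to obtain the estimate against $\lVert f\rVert_{\Lambda_{\alpha}([0,1]^2)}$. The only difference is one of care rather than of method: where the paper disposes of the constant by writing the two norms as powers of a single $C_{(N,N')}\le 1$, you explicitly trace $C_{(j,j')}\le C_A\lVert f\rVert_{\Lambda_\alpha}$ back through the Lipschitz bounds on $A'$, $A''$ and the coefficient bounds of Lemma~\ref{lem:3.10}, which is a sound (and arguably more complete) justification of the same step.
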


\begin{proof}
From Proposition \ref{prop:5.1}, $\Delta_{(N,N')}(A,f)$ can be written as a sum over characteristic functions supported on dyadic rectangles multiplied by constant expansion coefficients on each dyadic rectangle where each coefficient satisfies the following inequality, $| \beta^{(j,j')}_{(k,k')} | \leq 2^{-(j+j')(2\alpha + 1)}$. Appealing to Lemma \ref{lem:3.13} completes the proof. Since $\Delta_{(N,N')}(A,f) \in \Lambda_{2\alpha}([0,1]^2)$, the second part of the theorem can be proved using the norm from Definition \ref{defn:3.3}. Compute 

\begin{align}
& \lVert \Delta_{(N,N')}(A,f) \rVert_{\Lambda_{2\alpha}([0,1]^2)} = \sup_{j,k,j',k'} \frac{| <\Delta_{(N,N')}(A,f)(x,x'), \psi^j_k(x) \times \psi^{j'}_{k'}(x')> |}{2^{-(j+j')(2\alpha + 1)}} \nonumber \\
& = C_{(N,N')}^{(2\alpha + 1)}
\label{eq:63}
\end{align}

from Proposition \ref{prop:5.1} and 

\begin{align}
& \lVert f \rVert_{\Lambda_{\alpha}([0,1]^2)} = \sup_{j,k,j',k'} \frac{| <f(x,x'), \psi^j_k(x) \times \psi^{j'}_{k'}(x')> |}{2^{-(j+j')(\alpha + \frac{1}{2})}} \nonumber \\
& = C_{(N,N')}^{(\alpha + \frac{1}{2})}
\label{eq:64}
\end{align}

from Lemma \ref{lem:3.12}. Since $C_{(N,N')}$ is a constant related to the size of the dyadic rectangle, $C_{(N,N')} \leq 1$, producing:

\begin{align}
\lVert \Delta_{(N,N')}(A,f) \rVert_{\Lambda_{2\alpha}([0,1]^2)} \leq C_{A} \lVert f \rVert_{\Lambda_{\alpha}([0,1]^2)} 
\label{eq:65}
\end{align}

completing the second part of the proof.

\end{proof}

\section{Computational Example}

As a toy example, we consider a function supported on the complex plane with a ring singularity to illustrate the utility of the multiscale tensor paraproduct, $\Pi_{(A',A'')}^{(N,N')}$. Let $f_{\alpha}(z) \in \Lambda_{\alpha}([0,1] \times i[0,1]), \in 0 < \alpha < \frac{1}{2}$ such that

\begin{align}
f_{\alpha}(z) = 
\begin{cases}
(0.3 - |z|)^{\alpha} &  \text{if } |z| <  0.3 \\
(1 - \frac{0.3}{|z|})^{\alpha} & \text{if } \frac{0.3}{|z|} < 1
\end{cases}
\end{align}

Let $z = x + iy$ with $x,iy \in [0,1]$. We sample $N = 512$ equispaced points between $[0,1]$ on both the real and imaginary axis, and perform the multiscale paraproduct decomposition for $A(f) = e^{-0.2f_{\alpha}(z)}$ for $\alpha=4 \times 10^{-3}, 4 \times 10^{-2}, 4 \times 10^{-1}$ up to two settings of fixed scales, $(N=4,N'=4)$ and $(N=6,N'=6)$ and visualize $f , A(f) \coloneq e^{-0.2f(z)}$, and the approximation, $\tilde{A}_{(N,N')}(f)$ in Figure \ref{fig:6.1}

\begin{figure}[H]
    \centering
    \includegraphics[width=1.2\linewidth]{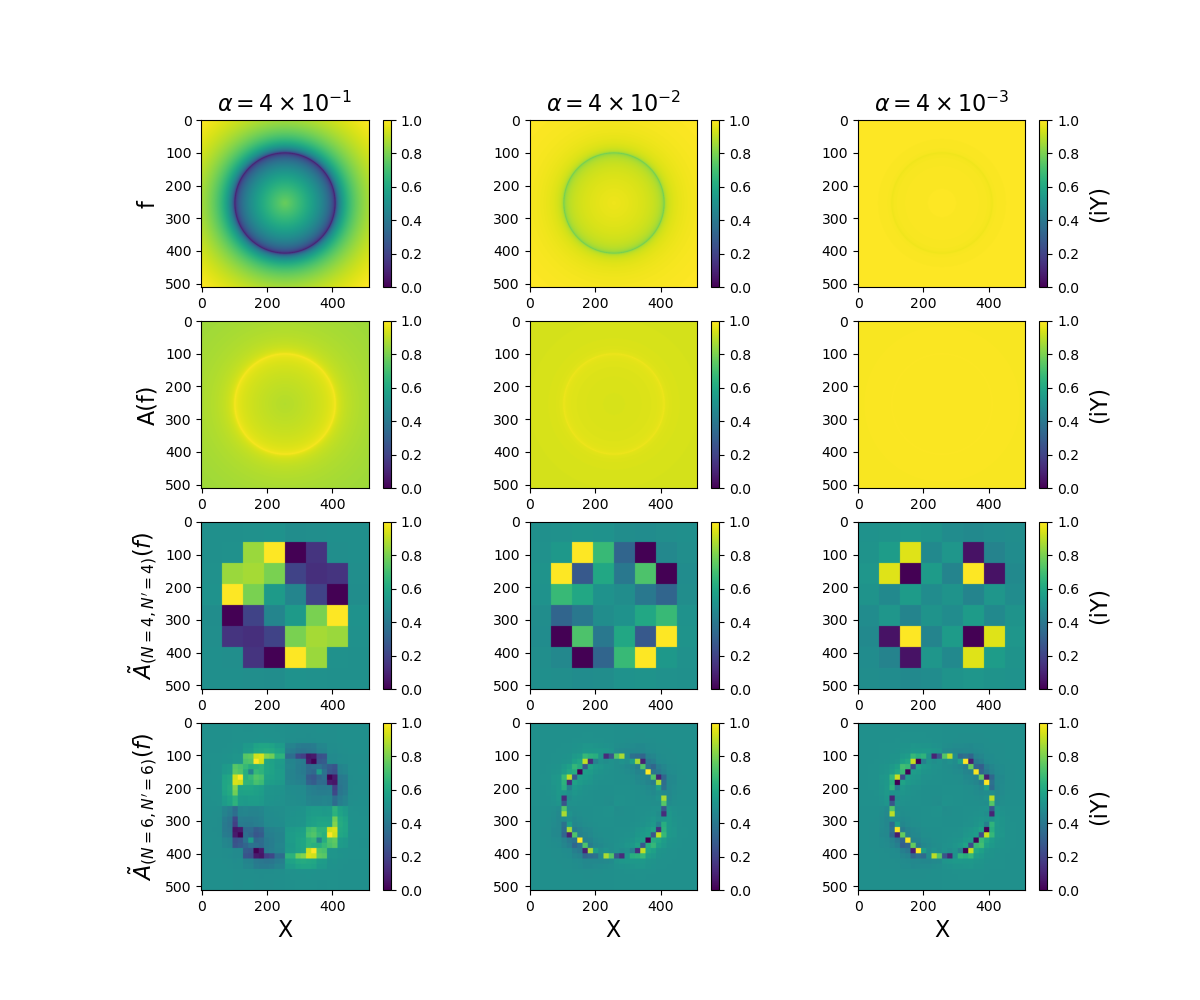}
    \caption{ Visualization of the paraproduct decomposition of $A(f) = e^{- 0.2 f(z) }$ for fixed scales $N,N' = 4$ (third row) and $N,N' = 6$ (last row) for $\alpha = 4 \times 10^{-1} , 4 \times 10^{-2}, 4 \times 10^{-3}$. Each column is a different $\alpha$ level and row 1 is the original function, $f$, row 2 is the smooth nonlinear composition, $A(f)$, and rows 3 and 4 are the approximations, $\tilde{A}_{(N=4,N'=4)}(f)$ and $\tilde{A}_{(N=6,N'=6)}(f)$, respectively. $X$ and $iY$ are the labels for the real and complex axes, respectively.}
    \label{fig:6.1}
\end{figure}

In Figure \ref{fig:6.1} the ring singularity is blurred by $A(f)$ as shown by the second row. Perturbing the $\alpha$ level also has a smoothing effect as seen by the columns. In particular, as $\alpha \to 0 ,f$ approaches a constant. The approximations $\tilde{A}_{(N=4,N'=4)}(f)$ and $\tilde{A}_{(N=6,N'=6)}(f)$ both corroborate our theoretical findings as some qualitative notion of the ring singularity is extracted in both instances; however, $\tilde{A}_{(N=6,N'=6)}(f)$ generally does a better job which is expected since additional scales are incorporated into the approximation. Of note is a peculiar effect that occurs as $\alpha \to 0$ for both approximations. For $\tilde{A}_{(N=4,N'=4)}(f)$ the approximations worsen as $\alpha \to 0$ as expected; this is because at coarse scales, the smoothing of $f$ incurred by decreasing $\alpha$ precludes the ring singularity from being discovered. The reverse relation holds for $\tilde{A}_{(N=6,N'=6)}(f)$. As $\alpha \to 0$, the approximation gets better; this is likely because at fine scales, the coarse scale tensor basis functions remove most of the smoothing from taking $\alpha \to 0$ while the finest scale tensor basis functions in the approximation only extract the ring singularity.  

\bibliographystyle{amsplain}
\bibliography{main}

@article{meyer1990ondelettes,
  title={Ondelettes et op{\'e}rateurs},
  author={Meyer, Yves},
  journal={I: Ondelettes},
  year={1990},
  publisher={Hermann}
}

@article{gavish2012sampling,
  title={Sampling, denoising and compression of matrices by coherent matrix organization},
  author={Gavish, Matan and Coifman, Ronald R},
  journal={Applied and Computational Harmonic Analysis},
  volume={33},
  number={3},
  pages={354--369},
  year={2012},
  publisher={Elsevier}
}

@inproceedings{gavish2010multiscale,
  title={Multiscale wavelets on trees, graphs and high dimensional data: theory and applications to semi supervised learning.},
  author={Gavish, Matan and Nadler, Boaz and Coifman, Ronald R},
  booktitle={ICML},
  volume={10},
  pages={367--74},
  year={2010}
}

@article{mallat1989theory,
  title={A theory for multiresolution signal decomposition: the wavelet representation},
  author={Mallat, Stephane G},
  journal={IEEE transactions on pattern analysis and machine intelligence},
  volume={11},
  number={7},
  pages={674--693},
  year={1989},
  publisher={Ieee}
}

@inproceedings{bony1981calcul,
  title={Calcul symbolique et propagation des singularit{\'e}s pour les {\'e}quations aux d{\'e}riv{\'e}es partielles non lin{\'e}aires},
  author={Bony, Jean-Michel},
  booktitle={Annales scientifiques de l'{\'E}cole normale sup{\'e}rieure},
  volume={14},
  number={2},
  pages={209--246},
  year={1981}
}

@article{muscalu2004bi,
  title={Bi-parameter paraproducts},
  author={Muscalu, Camil and Pipher, Jill and Tao, Terence and Thiele, Christoph},
  year={2004}
}

@article{muscalu2006multi,
  title={Multi-parameter paraproducts},
  author={Muscalu, Camil and Pipher, Jill and Tao, Terence and Thiele, Christoph},
  year={2006}
}

@article{leeb2016holder,
  title={H{\"o}lder--Lipschitz norms and their duals on spaces with semigroups, with applications to earth mover’s distance},
  author={Leeb, William and Coifman, Ronald},
  journal={Journal of Fourier Analysis and Applications},
  volume={22},
  number={4},
  pages={910--953},
  year={2016},
  publisher={Springer}
}

@article{chang2022boundedness,
  title={Boundedness of paraproducts on spaces of homogeneous type I},
  author={Chang, Der-Chen and Fu, Xing and Yang, Dachun},
  journal={Applicable Analysis},
  volume={101},
  number={6},
  pages={2144--2169},
  year={2022},
  publisher={Taylor \& Francis}
}

@book{coifman2006analyse,
  title={Analyse harmonique non-commutative sur certains espaces homog{\`e}nes: {\'e}tude de certaines int{\'e}grales singuli{\`e}res},
  author={Coifman, Ronald R and Weiss, Guido},
  volume={242},
  year={2006},
  publisher={Springer}
}

@article{bonami2012paraproducts,
  title={Paraproducts and products of functions in BMO (Rn) and H1 (Rn) through wavelets},
  author={Bonami, Aline and Grellier, Sandrine and Ky, Luong Dang},
  journal={Journal de math{\'e}matiques pures et appliqu{\'e}es},
  volume={97},
  number={3},
  pages={230--241},
  year={2012},
  publisher={Elsevier}
}

@book{ankenman2014geometry,
  title={Geometry and analysis of dual networks on questionnaires},
  author={Ankenman, Jerrod Isaac},
  year={2014},
  publisher={Yale University}
}

\end{document}